\def\normo#1{\left\|#1\right\|}
\def\abs#1{|#1|}
\def\aabs#1{\big|#1\big|}
\def\brk#1{\left(#1\right)}
\def\norm#1{\|#1\|}
\def\jb#1{\langle#1\rangle}
\newcommand{\N}{{\mathbb N}}
\newcommand{\T}{{\mathbb T}}
\newcommand{\R}{{\mathbb R}}
\newcommand{\Z}{{\mathbb Z}}
\newcommand{\ft}{{\mathcal{F}}}
\newcommand{\les}{{\lesssim}}
\newcommand{\Sch}{{\mathcal{S}}}
\numberwithin{equation}{section}
\newtheorem{theorem}{Theorem}[section]
\newtheorem{proposition}[theorem]{Proposition}
\newtheorem{lemma}[theorem]{Lemma}
\newtheorem{corollary}[theorem]{Corollary}
\newtheorem{remark}[theorem]{Remark}
\begin{document}
\title{\bf Global well-posedness and inviscid limit for the modified Korteweg-de
Vries-Burgers equation}

\author{\bf Hua Zhang  \date{}\\
{\small \it LMAM, School of Mathematical Sciences, Peking
University, Beijing 100871, China}\\ {\small E-mail:
zhanghuamaths@163.com} }\maketitle

\maketitle


{\bf Abstract:} Considering the Cauchy problem for the modified
Korteweg-de Vries-Burgers equation
\begin{eqnarray*}
u_t+u_{xxx}+\epsilon |\partial_x|^{2\alpha}u=2(u^{3})_x, \ \
u(0)=\phi,
\end{eqnarray*}
where $0<\epsilon,\alpha\leq 1$ and $u$ is a real-valued function,
we show that it is uniformly globally well-posed in $H^s\ (s\geq1)$
for all $\epsilon \in (0,1]$. Moreover, we prove that for any $s\geq
1$ and $T>0$, its solution converges in $C([0,T]; \,H^s)$ to that of
the MKdV equation if $\epsilon$ tends to $0$.

{\bf Keywords:} MKdV-Burgers equation, uniform global
well-posedness, inviscid limit behavior

{\bf MSC 2000:} 35Q53

\section{Introduction}
In this paper, we study the Cauchy problem for the modified
Korteweg-de Vries-Burgers (MKdV-B) equation with fractional
dissipation
\begin{eqnarray}
u_t+u_{xxx}+\epsilon |\partial_x|^{2\alpha}u=2(u^{3})_x, \ \
u(0)=\phi,\label{eq:MKdV-B}
\end{eqnarray}
where $0<\epsilon, \alpha \leq 1$, $u$ is a real-valued function of
$(x, t) \in \mathbb{R}\times \mathbb{R}_+$. The equation with
quadratic nonlinearity
\begin{eqnarray}
u_t+u_{xxx}+\epsilon |\partial_x|^{2\alpha}u=2(u^{2})_x, \ \
u(0)=\phi,\label{eq:kdvb}
\end{eqnarray}
has been derived as a model for the propagation of weakly nonlinear
dispersive long waves in some physical contexts when dissipative
effects occur (see \cite{OS}). On the other hand, the cubic
nonlinearity is also of much interest.

The Cauchy problems \eqref{eq:MKdV-B} and \eqref{eq:kdvb} has been
studied by many authors (see \cite{WL,WL2,MR,MR3,Guo,Guo2} and the
reference therein). In \cite{MR} Molinet and Ribaud studied Eq.
\eqref{eq:kdvb} in the case $\alpha=1$ and showed that
\eqref{eq:kdvb} is globally well-posed in $H^{s}\ (s>-1)$ by using
an $X^{s,b}$-type space which contains the dissipative structure.
Their result is sharp in the sense that the solution map of
\eqref{eq:kdvb} fails to be $C^2$ smooth at origin if $s<-1$. Their
result is generalized to the case $0<\alpha\leq1$ by Vento
\cite{Vento}, also by Guo and Wang \cite{Guo} and found a critical
wellposedness regularity
\begin{eqnarray}\label{eq:sa}
s_\alpha=\left \{
\begin{array}{ll}
-3/4, &  0<\alpha\leq1/2,\\
-3/(5-2\alpha),&  1/2<\alpha\leq 1.
\end{array}
\right.
\end{eqnarray}
In \cite{Guo}, Guo and Wang also proved a uniform global
wellposedness in $H^s\ (s>-3/4)$ and that the solution converges in
$C([0,T];H^s)$ to that of the KdV equation for any $T>0$ when
$\epsilon$ tends to zero, by using a $l^1-variant$ $X^{s,b}$ space
and I-method. For the Eq. \eqref{eq:MKdV-B}, following the methods
in \cite{MR}, Chen and Li \cite{WL} showed global wellposedness in
$H^{s}, s>-1/4$ and Chen, Li and Miao \cite{WL2} obtained in $H^{s}(
s>1/4-\alpha/4)$ for the case $0<\alpha\leq 1$.

Following the ideas in \cite{Guo}, we consider the inviscid limit of
Eq. \eqref{eq:MKdV-B} as $\epsilon$ tends to zero. Formally, if
$\epsilon=0$ then \eqref{eq:MKdV-B} reduces to the MKdV equation
\begin{eqnarray}
u_t+u_{xxx}=6u^{2}u_x, \ \ u(0)=\phi.\label{eq:MKdV}
\end{eqnarray}
The optimal result on local well-posedness of \eqref{eq:MKdV} in
$H^s$ was obtained by Kenig, Ponce and Vega \cite{KPV}. They
obtained that \eqref{eq:MKdV} is locally well-posed for $s\geq1/4$.
 The result on global well-posedness of \eqref{eq:MKdV} in $H^s$ was obtained  in
\cite{Tao2} where it was shown that \eqref{eq:MKdV} is globally
well-posed in $H^s$ for $s>1/4$ and a kind of modified energy
method, so called I-method, is introduced. It is natural to
conjecture that the solution of Eq. \eqref{eq:MKdV-B} will converge
to that of Eq. \eqref{eq:MKdV} if $\epsilon$ tends to zero. To prove
that, we prove first the uniform global well-posedness of Eq.
\eqref{eq:MKdV-B}. Then we need to study the difference equation
between \eqref{eq:MKdV-B} and \eqref{eq:MKdV}. We first treat the
dissipative term as perturbation and then use the uniform Lipschitz
continuity property of the solution map. Similar ideas can be found
in \cite{Wang} for the inviscid limit of the complex Ginzburg-Landau
equation. For $T>0$, we denote $S_T^{\epsilon}$, $S_T$ the solution
map of \eqref{eq:MKdV-B}, \eqref{eq:MKdV} respectively. The notation
 $A\lesssim B$ denotes that there exists a constant $C$, such that $A\leq CB$.  Now we state our main results.

\begin{theorem}\label{t12}
Assume $0<\alpha\leq 1$ and $s\geq1$. Let $\phi \in H^s(\R)$. Then
for any $T>0$, the solution map $S_T^\epsilon$ satisfies for all
$0<\epsilon \leq 1$
\begin{equation}
\norm{S_T^\epsilon \phi}_{F^s(T)}\les C(T,\norm{u}_{H^s})
\end{equation}
where $F^s(T)\subset C([0,T];H^s)$ which will be defined later and
$C(\cdot,\cdot)$ is a continuous function with $C(\cdot,0)=0$, and
also satisfies that for all $0<\epsilon \leq 1$
\begin{eqnarray}
\norm{S_T^\epsilon (\phi_1)-S_T^\epsilon
(\phi_2)}_{C([0,T],H^s)}\leq
C(T,\norm{\phi_1}_{H^1},\norm{\phi_2}_{H^1})\norm{\phi_1-\phi_2}_{H^1}.
\end{eqnarray}
\end{theorem}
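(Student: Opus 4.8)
The plan is to build the uniform theory in three stages---uniform local well-posedness, a uniform a priori energy bound, and propagation of the difference estimate---all carried out in the dissipative Bourgain-type resolution space $F^s(T)$ and all uniform in $\epsilon\in(0,1]$. First I would establish uniform local well-posedness: writing \eqref{eq:MKdV-B} in Duhamel form with the linear dissipative group $W_\epsilon(t)=e^{-t\partial_x^3-\epsilon t\abs{\partial_x}^{2\alpha}}$ and solving by contraction in $F^s(T)$. The nonlinear input is the trilinear estimate $\norm{\partial_x(u_1u_2u_3)}_{N^s}\les\prod_{i}\norm{u_i}_{F^s}$ together with the linear and dissipative bounds (which I assume are proven earlier in the paper), and the point is that the resulting existence time $\delta=\delta(\norm{\phi}_{H^s})$ and the bound $\norm{u}_{F^s(\delta)}\les\norm{\phi}_{H^s}$ are independent of $\epsilon$ because the dissipative factor in $W_\epsilon$ only improves the estimates. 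This already yields the local piece of the claimed $F^s(T)$ bound.

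Second, and this is the heart of the matter, I would upgrade to a global-in-time bound by tracking the conserved MKdV functionals as perturbed by the dissipation. The relevant quantities are the mass $\norm{u}_{L^2}$ and the coercive (defocusing-sign) energy $E(u)=\half{1}\norm{u_x}_{L^2}^2+\half{1}\norm{u}_{L^4}^4$, both exactly conserved by the MKdV flow \eqref{eq:MKdV}. Along \eqref{eq:MKdV-B} one computes that the mass is nonincreasing, $\frac{d}{dt}\norm{u}_{L^2}^2=-2\epsilon\norm{\abs{\partial_x}^{\alpha}u}_{L^2}^2\le0$, while the energy obeys
\[
\frac{d}{dt}E(u)=-\epsilon\norm{\abs{\partial_x}^{1+\alpha}u}_{L^2}^2-2\epsilon\int u^3\abs{\partial_x}^{2\alpha}u\,dx.
\]
The first term has the good sign; the difficulty is the second, nonlinear dissipative term, which has no definite sign and, estimated pointwise in time, forces a superlinear differential inequality for $E$ that would blow up in finite time for fixed $\epsilon$. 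The resolution is to integrate it over a local interval rather than bound it pointwise: writing $\abs{\partial_x}^{2\alpha}=\abs{\partial_x}^{\alpha-1}\abs{\partial_x}^{1+\alpha}$ and applying Cauchy--Schwarz, half of the bad term is absorbed by the good term $\epsilon\norm{\abs{\partial_x}^{1+\alpha}u}_{L^2}^2$, and the remainder is controlled by $\epsilon\norm{u^3}_{\dot H^{\alpha-1}}^2$, which the one-dimensional embedding $H^1\hookrightarrow L^q$ and the bilinear structure of $F^1$ bound by $\norm{u}_{F^1(\delta)}$. Summing the resulting energy increments over the $\lceil T/\delta\rceil$ local intervals---with $\delta$ depending only on the now uniformly controlled $H^1$ norm---gives $\sup_{[0,T]}\norm{u(t)}_{H^1}\le C(T,\norm{\phi}_{H^1})$ uniformly in $\epsilon$, and hence the full $F^s(T)$ bound of the first assertion by persistence of regularity for $s>1$.

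Finally, for the Lipschitz estimate I would set $w=S_T^\epsilon\phi_1-S_T^\epsilon\phi_2$, which solves the difference equation $w_t+w_{xxx}+\epsilon\abs{\partial_x}^{2\alpha}w=2\partial_x\big((u_1^2+u_1u_2+u_2^2)w\big)$ with $w(0)=\phi_1-\phi_2$. Running the same $F^1$ contraction and Gronwall scheme on this equation, which is linear in $w$, and feeding in the uniform bounds $\norm{u_i}_{F^1(T)}\le C(T,\norm{\phi_i}_{H^1})$ from the previous step to control the variable coefficient $u_1^2+u_1u_2+u_2^2$, propagates the $H^1$ distance across $[0,T]$ and produces $\norm{w}_{C([0,T];H^1)}\le C(T,\norm{\phi_1}_{H^1},\norm{\phi_2}_{H^1})\norm{\phi_1-\phi_2}_{H^1}$, again uniformly in $\epsilon$ since the dissipative semigroup only contributes favorably.

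The main obstacle is the second stage. The nonlinear dissipative term in the energy identity destroys exact conservation and cannot be closed by a pointwise-in-time energy inequality, so the argument must exploit both the good sign of the linear dissipation and the dispersive/parabolic space-time smoothing of $F^1$ to bound its time integral uniformly in $\epsilon\in(0,1]$, interpolating between the conservative regime $\epsilon\to0$ (where MKdV energy conservation is recovered) and the strongly parabolic regime $\epsilon\sim1$.
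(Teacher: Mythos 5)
Your stages 1 and 3 track the paper: uniform local well-posedness by contraction in $F^s$ via the trilinear estimate of Lemma \ref{l41} with the linear estimates (the paper reduces to \emph{small data} through the scaling \eqref{eq:scaling} rather than shrinking the time interval, a cosmetic difference), and the Lipschitz bound by running the same scheme on the difference equation, as in Sections 5 and 7. The genuine gap is in your stage 2, which you yourself flag as the heart of the matter. Your claim that a pointwise-in-time energy inequality is necessarily superlinear and must be abandoned is false, and the paper closes exactly that inequality. After absorbing half of the good term one is left with $\epsilon\norm{u}_{6}^{6}$, and the sharp one-dimensional Gagliardo--Nirenberg inequality $\norm{u}_6^6 \les \norm{u}_2^4\norm{u_x}_2^2$, combined with the monotone decay of the mass under \eqref{eq:MKdV-B} (so $\norm{u(t)}_2\leq\norm{\phi}_2$), makes the right-hand side \emph{linear} in $\norm{u_x}_2^2$ with coefficient independent of $\epsilon$; plain Gronwall then gives the uniform global bound \eqref{here}, $\sup_{[0,T]}\norm{u(t)}_{H^1}+\epsilon^{1/2}(\int_0^T\norm{\Lambda^{2\alpha}u}_2^2\,d\tau)^{1/2}\leq C(T,\norm{\phi}_{H^1})$, for all $T>0$ and all $\epsilon\in(0,1]$. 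Your error is that you estimate the remainder by $\epsilon\norm{u^3}_{\dot H^{\alpha-1}}^2$ and then pay the full $H^1$ norm, producing a bound cubic in the energy.

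Your proposed repair does not close. Bounding the increment on each local interval by $\epsilon\,\delta\,C(\norm{u}_{H^1})^3$ (or by any function of $\norm{u}_{F^1(\delta)}$, which the local theory controls only by the $H^1$ norm at the left endpoint) and summing over the $\lceil T/\delta\rceil$ intervals yields, at best, $E(T)\leq E(0)+C\,T\,\epsilon\,(1+\sup_{[0,T]}E)^3$. For small $\epsilon$ this bootstraps on times $T\les\epsilon^{-1}$, but for $\epsilon\sim 1$ and $T$ large there is no self-consistent choice of the bound --- and the theorem requires uniformity over \emph{all} $\epsilon\in(0,1]$ and \emph{every} $T>0$. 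The ``interpolation between the conservative and strongly parabolic regimes'' is asserted but never carried out, and the $F^1$ space-time smoothing cannot rescue it, since converting the dissipative bound $\epsilon\int_0^T\norm{\Lambda^{2\alpha}u}_2^2\,d\tau$ into pointwise control loses a factor $\epsilon^{-1/2}$. The fix is elementary and is precisely the paper's move: at the step where you reach $\epsilon\norm{u^3}_{\dot H^{\alpha-1}}^2$, use Gagliardo--Nirenberg together with mass decay (for instance $\norm{u^3}_{L^2}^2=\norm{u}_6^6\les\norm{\phi}_2^4\norm{u_x}_2^2$ when $\alpha=1$) instead of the full $H^1$ norm; this restores a linear Gronwall inequality and makes the interval-summation scheme unnecessary. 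One small point in your favor: your energy $E(u)=\frac12\norm{u_x}_2^2+\frac12\norm{u}_4^4$ is indeed conserved and coercive for the sign of the nonlinearity in \eqref{eq:MKdV}, so no Gagliardo--Nirenberg is needed for coercivity of $E$ itself, whereas the paper's functional \eqref{eq:L2law} carries the opposite quartic sign and needs $\norm{u}_4^4\les\norm{u}_2^3\norm{u_x}_2$ for coercivity.
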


We also have the uniform persistence of regularity, following the
standard argument. For local well-posedness we actually prove that
complex-valued Eq. \eqref{eq:MKdV-B} is uniformly locally well-posed
in $H^s(s\geq 1/4)$. For the limit behavior, we have
\begin{theorem}\label{t13}
Assume $0<\alpha\leq 1$. Let $\phi \in H^s(\R)$, $s\geq1$. For any
$T>0$, then
\begin{equation}\label{eq:limitthm}
\lim_{\epsilon\rightarrow
0^+}\norm{S_T^\epsilon(\phi)-S_T(\phi)}_{C([0,T], H^s)}=0.
\end{equation}
\end{theorem}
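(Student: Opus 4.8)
The plan is to prove Theorem~\ref{t13} by treating the dissipative term $\epsilon|\partial_x|^{2\alpha}u$ as a perturbation and exploiting the uniform-in-$\epsilon$ estimates already supplied by Theorem~\ref{t12}. Let $u^\epsilon = S_T^\epsilon(\phi)$ and $u = S_T(\phi)$, and set $w = u^\epsilon - u$. Subtracting the two equations, $w$ solves a forced MKdV-type difference equation
\begin{equation}
w_t + w_{xxx} + \epsilon|\partial_x|^{2\alpha}u^\epsilon = 2\partial_x\big((u^\epsilon)^3 - u^3\big), \qquad w(0) = 0,
\end{equation}
so that the inhomogeneous term $-\epsilon|\partial_x|^{2\alpha}u^\epsilon$ carries the full $\epsilon$-dependence and the nonlinear difference $(u^\epsilon)^3 - u^3 = w\big((u^\epsilon)^2 + u^\epsilon u + u^2\big)$ is linear in $w$ with coefficients controlled uniformly by Theorem~\ref{t12}. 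The first step is to record, in the $F^s(T)$-space, a difference estimate of the schematic form
\begin{equation}
\norm{w}_{F^s(T)} \les \norm{\epsilon|\partial_x|^{2\alpha}u^\epsilon}_{N^s(T)} + C\big(T,\norm{\phi}_{H^1}\big)\,\norm{w}_{F^s(T)}^{\,\text{(trilinear)}},
\end{equation}
where $N^s(T)$ is the companion nonlinearity space; after absorbing the nonlinear contributions on a sufficiently short time interval one reaches a bound $\norm{w}_{C([0,T];H^s)} \les \epsilon\,\norm{|\partial_x|^{2\alpha}u^\epsilon}_{N^s(T)}$.

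The core quantitative step is then to show the driving term is $o(1)$ as $\epsilon\to0^+$. Because $0<\alpha\le1$, the dissipative symbol $\epsilon|\xi|^{2\alpha}$ is a Fourier multiplier of order at most $2$, so at first glance $\epsilon|\partial_x|^{2\alpha}u^\epsilon$ looks like it costs two extra derivatives; the whole point is that the parabolic smoothing built into $F^s(T)$ (the dissipative weight in the $X^{s,b}$-type norm) exactly compensates. Concretely I would prove a smoothing estimate asserting that $\norm{\epsilon|\partial_x|^{2\alpha}v}_{N^s(T)} \les \epsilon^{\theta}\,\norm{v}_{F^s(T)}$ for some $\theta>0$, uniformly in $\epsilon\in(0,1]$, by splitting into low frequencies (where the two derivatives are harmless and the prefactor $\epsilon$ gives the decay outright) and high frequencies (where one pays $\epsilon|\xi|^{2\alpha}$ against the resolvent gain $\langle \tau-\xi^3\rangle^{-1}$ plus the dissipative weight, and the positive power of $\epsilon$ emerges from interpolating the two regimes). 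Combined with the uniform bound $\norm{u^\epsilon}_{F^s(T)}\le C(T,\norm{\phi}_{H^s})$ from Theorem~\ref{t12}, this yields $\norm{w}_{C([0,T];H^s)} \les \epsilon^{\theta}\,C(T,\norm{\phi}_{H^s}) \to 0$.

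A technical wrinkle is that the short-time nonlinear closure gives the estimate only on a time interval whose length depends on the $H^s$-norm but \emph{not} on $\epsilon$; to pass to an arbitrary $T>0$ I would iterate over $O(T)$ consecutive subintervals, using the uniform bound of Theorem~\ref{t12} to keep the norm of the data at each restart controlled independently of $\epsilon$, so that the accumulated error is still $\les (\text{number of steps})\cdot\epsilon^{\theta} \to 0$. I expect the main obstacle to be the high-frequency smoothing estimate: making the gain of a genuine positive power $\epsilon^{\theta}$ rigorous requires carefully balancing $\epsilon|\xi|^{2\alpha}$ against the modulation/dissipation weights in the $F^s(T)$ and $N^s(T)$ norms (whose precise definitions, promised later in the paper, control exactly how much the Duhamel operator can absorb), rather than any difficulty with the nonlinearity, which is tamed by the uniform Lipschitz estimate already granted in Theorem~\ref{t12}.
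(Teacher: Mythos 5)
There is a genuine gap at your core quantitative step. The claimed smoothing estimate $\norm{\epsilon|\partial_x|^{2\alpha}v}_{N^s(T)}\les \epsilon^{\theta}\norm{v}_{F^s(T)}$ with $\theta>0$ uniform in $\epsilon\in(0,1]$ is false, and the premise behind it (``parabolic smoothing built into $F^s(T)$'') does not hold: the spaces $F^s$ and $N^s$ defined in Section 2 are built solely from the Airy modulation weight $\jb{\tau-\xi^3}$, with no dissipative weight at all --- that is precisely what makes them uniform in $\epsilon$ (the dissipative weight appears only in the Molinet--Ribaud space $X^{b,s,\alpha}$, which is not the space used for the uniform estimates). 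Concretely, testing on $v$ with Fourier support in $|\xi|\sim 2^k$, $|\tau-\xi^3|\les 1$, the ratio of the two sides is $\sim \epsilon\, 2^{2\alpha k}$, unbounded in $k$ for each fixed $\epsilon$, so no $\epsilon^{\theta}$ gain can be extracted by any low/high frequency interpolation. More fundamentally, your estimate would imply a uniform rate $\norm{S_T^\epsilon(\phi)-S_T(\phi)}_{C([0,T];H^s)}\les \epsilon^{\theta}C(T,\norm{\phi}_{H^s})$ on bounded sets of $H^s$, which fails already for the linear flow: for data concentrated at frequencies $|\xi|\ges \epsilon^{-1/(2\alpha)}$ the factor $1-e^{-\epsilon t|\xi|^{2\alpha}}$ is of order one at $t\sim 1$. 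This is exactly why Remark \ref{hr} states that there is no convergence rate in the same regularity space.

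The paper's proof supplies the two ideas you are missing. First, the positive power of $\epsilon$ comes not from the function spaces but from the parabolic energy estimate \eqref{eq:H1law}: $\sup_{[0,T]}\norm{u}_{H^2}+\epsilon^{1/2}\brk{\int_0^T\norm{|\partial_x|^{2\alpha+1}u}_2^2d\tau}^{1/2}\leq C(T,\norm{\phi}_{H^2})$, so in the Duhamel estimate for $w=u_\epsilon-v$ the forcing is bounded by $\epsilon\norm{u_\epsilon}_{L^2_{[0,2]}\dot H_x^{2\alpha+s}}\les \epsilon^{1/2}C(T,\norm{\phi_1}_{H^2})$; that is, the rate $\epsilon^{1/2}$ is available only when the datum is smoother than the limit space ($H^2$ data, convergence measured in $H^1$), consistent with the obstruction above. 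Second, to treat general $\phi\in H^s$ one truncates in frequency, $\varphi_K=P_{\leq K}\varphi$, and writes $\norm{S_T^\epsilon\varphi-S_T\varphi}\leq \norm{S_T^\epsilon\varphi-S_T^\epsilon\varphi_K}+\norm{S_T^\epsilon\varphi_K-S_T\varphi_K}+\norm{S_T\varphi_K-S_T\varphi}$: the outer terms are small for $K$ large by the $\epsilon$-uniform Lipschitz continuity of the solution maps from Theorem \ref{t12} (this is where that theorem really earns its keep, not merely in controlling the coefficients of the difference equation), while the middle term is $\les \epsilon^{1/2}C(T,K,\norm{\varphi}_{H^s})$ by the rate estimate applied to the smooth datum $\varphi_K$. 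Fixing $K$ first and then letting $\epsilon\to 0$ gives \eqref{eq:limitthm}, with no rate, as must be the case. Your iteration-in-time remark is fine in itself but moot until the single-interval estimate is repaired along these lines.
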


\begin{remark} \label{hr}
We are only concerned with the limit in the same regularity space.
There seems no convergence rate. This can be seen from the linear
solution,
\begin{equation}
\norm{e^{-t\partial_{x}^3-t\epsilon|\partial_x|^{2\alpha}}\phi-e^{-t\partial_{x}^3}\phi}_{C([0,T],H^s)}\rightarrow
0, \quad \mbox{as }\epsilon\rightarrow 0,
\end{equation}
but without any convergence rate. We believe that there is a
convergence rate if we assume the initial data has higher regularity
than the limit space. For example, we prove that
\begin{equation}
\norm{S_T^\epsilon(\phi_1)-S_T(\phi_2)}_{C([0,T],H^1)}\les
\norm{\phi_1-\phi_2}_{H^1}+\epsilon^{1/2}C(T,\norm{\phi_1}_{H^2},\norm{\phi_2}_{H^1}).
\end{equation}
\end{remark}

The rest of the paper is organized as following. We present some
notations and Banach function spaces  in Section 2. We give a
symmetric estimate in Section 3. We prove the trilinear estimate in
Section 4. We present uniform LWP in Section 5 and prove Theorem
\ref{t12} in Section 6. Theorem \ref{t13}  is proved in Section 7.

\section{Notation and Definitions} \label{notation}
For $x, y\in \R$, $x\sim y$ means that there exist $C_1, C_2 > 0$
such that $C_1|x|\leq |y| \leq C_2|x|$. For $f\in \Sch'$ we denote
by $\widehat{f}$ or $\ft (f)$ the Fourier transform of $f$ for both
spatial and time variables,
\begin{eqnarray*}
\widehat{f}(\xi, \tau)=\int_{\R^2}e^{-ix \xi}e^{-it \tau}f(x,t)dxdt.
\end{eqnarray*}
We denote  by $\ft_x$ the the Fourier transform on spatial variable
and if there is no confusion, we still write $\ft=\ft_x$. Let
$\mathbb{Z}$ and $\mathbb{N}$ be the sets of integers and natural
numbers, respectively.  For the simplicity, we let $\Z_{+}= \N
\bigcup \{0\}$. For $k\in \Z$, let
 $I_k=\{\xi: |\xi|\in [2^{k-1}, 2^{k+1}]\ \}$. For $k\in \Z_+$ let
 $\widetilde{I}_k=[-2,2]$ if $k=0$ and $\widetilde{I}_k=I_k$ if
$k\geq 1$.  For $k\in \Z_+$ and $j\geq 0$ let $D_{k,j}=\{(\xi,
\tau)\in \R \times \R: \xi \in \widetilde{I}_k, \tau-\omega(\xi)\in
\widetilde{I}_j\}$. For $k\in \Z$ and $j\geq 0$ let
$\dot{D}_{k,j}=\{(\xi, \tau)\in \R \times \R: \xi \in {I}_k,
\tau-\omega(\xi)\in \widetilde{I}_j\}$.    We use $f*g$ will stand
for the convolution on time and spatial variables, i.e.,
$$
 (f* g)
(t,x)= \int_{\R^2} f(t-s,x-y) g(s,y)dsdy.
$$

 Let $\eta_0: \R\rightarrow
[0, 1]$ denote an even smooth function supported in $[-8/5, 8/5]$
and equal to $1$ in $[-5/4, 5/4]$. For $k\in \N$ let
$\eta_k(\xi)=\eta_0(\xi/2^k)-\eta_0(\xi/2^{k-1})$ and $\eta_{\leq
k}=\sum_{k'=0}^k\eta_{k'}$. For $k\in \Z$ let
$\chi_k(\xi)=\eta_0(\xi/2^k)-\eta_0(\xi/2^{k-1})$. Roughly speaking,
$\{\chi_k\}_{k\in \mathbb{Z}}$ is the homogeneous decomposition
function sequence and $\{\eta_k\}_{k\in \mathbb{Z}_+}$ is the
non-homogeneous decomposition function sequence to the frequency
space. For $k\in \Z_+$ let $P_k$ denote the operator on $L^2(\R)$
defined by \[ \widehat{P_ku}(\xi)=\chi_k(\xi)\widehat{u}(\xi)\]   By
a slight abuse of notation we also define the operator $P_k$ on
$L^2(\R\times \R)$ by the formula $\ft(P_ku)(\xi,
\tau)=\chi_k(\xi)\ft (u)(\xi, \tau)$. For $l\in \Z$ let
\[
P_{\leq l}=\sum_{k\leq l}P_k, \quad P_{\geq l}=\sum_{k\geq l}P_k.
\]

We define the Lebesgue spaces $L_T^qL_x^p$ and $L_x^pL_T^q$ by the
norms
\begin{equation}
\norm{f}_{L_T^qL_x^p}=\normo{\norm{f}_{L_x^p}}_{L_t^q([0,T])}, \quad
\norm{f}_{L_x^pL_T^q}=\normo{\norm{f}_{L_t^q([0,T])}}_{L_x^p}.
\end{equation}
We denote by $W_0$ the semigroup associated with Airy-equation
\[
\ft_x(W_0(t)\phi)(\xi)=\exp[i\xi^3t]\widehat{\phi}(\xi), \  \forall
\ t\in \R,\ \phi \in \mathcal {S}'.
\]
For $0<\epsilon\leq 1$ and $0< \alpha \leq 1$, we denote by
$W_\epsilon^\alpha$ the semigroup associated with the free evolution
of \eqref{eq:MKdV-B},
\[
\ft_x(W_\epsilon^\alpha(t)\phi)(\xi)=\exp[-\epsilon
|\xi|^{2\alpha}t+i\xi^3t]\widehat{\phi}(\xi),\ \forall \ t\geq 0,\
\phi \in \mathcal {S}',
\]
and we extend $W_\epsilon^\alpha$ to a linear operator defined on
the whole real axis by setting
\[
\ft_x(W_\epsilon^\alpha(t)\phi)(\xi)=\exp[-\epsilon
|\xi|^{2\alpha}|t|+i\xi^3t]\widehat{\phi}(\xi),\ \forall \ t\in \R,
\ \phi \in \mathcal {S}'.
\]
Let
\begin{eqnarray} \label{defL}
L(f)(x,t)=2W_0(t)\int_{\R^2}e^{ix\xi}\frac{e^{it\tau'}-e^{-\epsilon
|t||\xi|^{2\alpha}}}{i\tau'+\epsilon
|\xi|^{2\alpha}}\ft(W_0(-t)f)(\xi,\tau')d\xi d\tau'.
\end{eqnarray}
To study the low regularity of \eqref{eq:kdvb}, Molinet and Ribaud
introduced the variant version of Bourgain's spaces with dissipation
\begin{equation}\label{eq:Fs}
\norm{u}_{X^{b,s,\alpha}}=\norm{\jb{i(\tau-\xi^3)+|\xi|^{2\alpha}}^b\jb{\xi}^s\widehat{u}}_{L^2(\R^2)},
\end{equation}
where $\jb{\cdot}=(1+|\cdot|^2)^{1/2}$. The standard $X^{b,s}$ space
for \eqref{eq:MKdV} used by Bourgain, Kenig, Ponce and  Vega (see
\cite{Bour}, \cite{KPV} ) is defined by
\begin{eqnarray*}
\norm{u}_{X^{b,s}}=\norm{\jb{\tau-\xi^3}^b\jb{\xi}^s\widehat{u}}_{L^2(\R^2)}.
\end{eqnarray*}
We introduce the Banach spaces used in \cite{Guo}. For $k\in \Z_+$
we define the dyadic $X^{b,s}$-type normed spaces $X_k=X_k(\R^2)$,
\begin{eqnarray*}
X_k=\{f\in L^2(\R^2): &&f(\xi,\tau) \mbox{ is supported in }
I_k\times\R \mbox{ and }\\&& \norm{f}_{X_k}=\sum_{j=0}^\infty
2^{j/2}\norm{\eta_j(\tau-\xi^3)\cdot f}_{L^2}\}.
\end{eqnarray*}
This kind of spaces were introduced, for instance, in \cite{IKT},
\cite{Tata} and  \cite{In-Ke} for the BO equation. From the
definition of $X_k$, we see that for any $l\in \Z_+$ and $f_k\in
X_k$ (see also \cite{IKT}),
\begin{equation}
\sum_{j=0}^\infty 2^{j/2}\norm{\eta_j(\tau-\xi^3)\int
|f_k(\xi,\tau')|2^{-l}(1+2^{-l}|\tau-\tau'|)^{-4}d\tau'}_{L^2}\les
\norm{f_k}_{X_k}.
\end{equation}
Hence for any $l\in \Z_+$, $t_0\in \R$, $f_k\in X_k$  and $\gamma
\in \Sch(\R)$, then
\begin{equation}
\norm{\ft[\gamma(2^l(t-t_0))\cdot \ft^{-1}f_k]}_{X_k}\les
\norm{f_k}_{X_k}.
\end{equation}
For $s\geq 0$, we define the following spaces:
\begin{eqnarray}
&&F^{s}= \{u\in \mathcal {S}'(\R^2):\ \norm{u}_{F^s}^2=\sum_{k \in
\Z_+}2^{2sk}\norm{\eta_k(\xi)\ft(u)}_{X_k}^2<\infty\},\\
&& N^{s}=  \{u\in \mathcal {S}'(\R^2): \norm{u}_{N^s}^2=\sum_{k\in
\Z_+}2^{2sk}\norm{(i+\tau-\xi^3)^{-1}\eta_k(\xi)\ft(u)}_{X_k}^2<\infty\}.
\end{eqnarray}
For $T\geq 0$, we define the time-localized spaces
$X_T^{b,s,\alpha}$, $X_T^{b,s}$, $F^{s}(T)$ and $N^{s}(T)$
\begin{eqnarray}
&& \norm{u}_{X_T^{b,s,\alpha}}=\inf_{w\in
X^{b,s,\alpha}}\{\norm{w}_{X^{b,s,\alpha}},\  w(t)=u(t) \mbox{ on }
[0, T]\};\nonumber \\
&&\norm{u}_{X_T^{b,s}}=\inf_{w\in X^{b,s}}\{\norm{w}_{X^{b,s}}, \ \
w(t)=u(t) \mbox{ on } [0, T]\};\nonumber\\
&&\norm{u}_{F^{s}(T)}=\inf_{w\in F^{s}}\{\norm{w}_{F^{s}}, \
w(t)=u(t)
\mbox{ on } [0, T]\};\nonumber\\
&&\norm{u}_{N^{s}(T)}=\inf_{w\in N^{s}}\{\norm{w}_{N^{s}}, \
w(t)=u(t) \mbox{ on } [0, T]\}.
\end{eqnarray}

As a conclusion to this section, we recall a result in \cite{Guo}.
\begin{proposition}[Proposition 2.1, \cite{Guo}]\label{p21}
Let $Y$ be a Banach space of functions on $\R\times \R$ with the
property that
\[\norm{e^{it\tau_0}e^{-t\partial_x^3}f}_Y\les \ \norm{f}_{H^s(\R)}\]
holds for all $f\in H^s(\R)$ and $\tau_0\in \R$. Then we have the
embedding
\begin{equation}
\left(\sum_{k\in \Z_+}\norm{P_k u}_{Y}^2 \right)^{1/2}\lesssim
\norm{u}_{F^s}.
\end{equation}
\end{proposition}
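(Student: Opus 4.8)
The plan is to prove the estimate one dyadic block at a time and then reassemble in $\ell^2$. Since both sides are square-sums over $k\in\Z_+$, it suffices to establish, for each fixed $k$, the single-frequency bound
\[
\norm{P_k u}_Y \les 2^{sk}\,\norm{\eta_k(\xi)\ft(u)}_{X_k},
\]
after which squaring and summing over $k$ gives $\sum_k\norm{P_k u}_Y^2\les\sum_k 2^{2sk}\norm{\eta_k\ft(u)}_{X_k}^2=\norm{u}_{F^s}^2$ at once. The mismatch between the homogeneous cutoff $\chi_k$ hidden in $P_k$ and the nonhomogeneous cutoff $\eta_k$ in the $F^s$-norm is harmless: for $k\geq1$ the two symbols coincide, and for $k=0$ it is a routine low-frequency adjustment. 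Thus I reduce matters to showing that whenever $f_k$ is supported in $I_k\times\R$, the function $g=\ft^{-1}(f_k)$ obeys $\norm{g}_Y\les 2^{sk}\norm{f_k}_{X_k}$.

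First I would represent $g$ as a superposition of modulated free Airy waves. Slicing the space-time Fourier variable along the characteristic surface by setting $\mu=\tau-\xi^3$ and writing $g_\mu=\ft_x^{-1}[\,f_k(\xi,\xi^3+\mu)\,]$, a change of variables in the Fourier inversion formula yields
\[
g(x,t)=c\int_\R e^{it\mu}\,\big(W_0(t)g_\mu\big)(x)\,d\mu,
\]
so that $g$ is an integral over $\mu$ of the modulated free solutions $e^{it\mu}W_0(t)g_\mu$, each having its space-time Fourier transform supported on the shifted surface $\tau=\mu+\xi^3$. This is precisely the form on which the hypothesis on $Y$ operates, with the modulation parameter playing the role of $\tau_0$.

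Next I would apply Minkowski's inequality to move the $Y$-norm inside the $\mu$-integral and invoke the hypothesis $\norm{e^{it\mu}W_0(t)g_\mu}_Y\les\norm{g_\mu}_{H^s}$, crucially using that the implied constant is uniform in the modulation $\mu=\tau_0$. Because $g_\mu$ has spatial frequency confined to $I_k$, one has $\norm{g_\mu}_{H^s}\sim 2^{sk}\norm{g_\mu}_{L^2}=2^{sk}A(\mu)$ with $A(\mu):=\norm{f_k(\xi,\xi^3+\mu)}_{L^2_\xi}$, and hence $\norm{g}_Y\les 2^{sk}\norm{A}_{L^1_\mu}$. It then remains to dominate $\norm{A}_{L^1_\mu}$ by $\norm{f_k}_{X_k}=\sum_{j\geq0}2^{j/2}\norm{\eta_j\,A}_{L^2_\mu}$, which follows from the partition of unity $\sum_j\eta_j=1$ together with Cauchy-Schwarz applied on each dyadic modulation band $\{|\mu|\sim 2^j\}$, where the measure factor $2^{j/2}$ is exactly compensated by the weight built into the $X_k$-norm.

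The main obstacle I expect is the representation step: one must verify that the superposition formula converges and genuinely reproduces $g$, and one must keep the constant in the hypothesis independent of $\tau_0$ so that the Minkowski integration in $\mu$ loses nothing. Everything else — the $\ell^2$ reassembly over $k$, the passage from the $H^s$ to the $L^2$ frequency localization, and the closing Cauchy-Schwarz against the $2^{j/2}$ weight — is routine once the decomposition into modulated free waves is in hand.
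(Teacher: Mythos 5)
Your proof is correct and takes essentially the same route as the paper, which states this result without reproving it by citing Proposition 2.1 of \cite{Guo}: there too the argument is the foliation of the space-time Fourier support over modulations $\tau=\xi^3+\mu$, writing $P_ku$ as a superposition of modulated free Airy waves $e^{it\mu}W_0(t)g_\mu$, applying the hypothesis uniformly in $\tau_0=\mu$, and passing from $L^1_\mu$ to the $2^{j/2}$-weighted $\ell^1L^2$ structure of $X_k$ by Cauchy--Schwarz on each dyadic modulation band. Your handling of the $\chi_k$ versus $\eta_k$ mismatch at $k=0$ is indeed the routine adjustment you describe, so there is nothing to fix.
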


\section {A symmetric estimate}

According to the standard fixed point argument, we will need the
following trilinear estimate.
\begin{lemma}\label{l41}
If $s\geq \frac{1}{4}$, then exists $C>0$, such that for any
$u,v,w\in F^s$
\begin{eqnarray}
\norm{\partial_x(uvw)}_{N^{s}}\leq
C(\norm{u}_{F^{s}}\norm{v}_{F^{\frac{1}{4}}}\norm{w}_{F^{\frac{1}{4}}}+\norm{v}_{F^{s}}\norm{u}_{F^{\frac{1}{4}}}\norm{w}_{F^{\frac{1}{4}}}+\norm{w}_{F^{s}}\norm{v}_{F^{\frac{1}{4}}}\norm{u}_{F^{\frac{1}{4}}})
\end{eqnarray}
\end{lemma}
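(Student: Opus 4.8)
The plan is to prove the estimate by a Littlewood--Paley decomposition of all four functions (the three inputs and the output) into dyadic frequency blocks $P_{k_i}$ and, within each block, into modulation pieces $f_{k_i,j_i}=\eta_{j_i}(\tau-\xi^3)\eta_{k_i}(\xi)\ft(\cdot)$ supported on $D_{k_i,j_i}$. Unfolding the definitions of $F^s$ and $N^s$ and using the quasi-orthogonality of the blocks together with Proposition \ref{p21}, the claimed inequality reduces to a summable bound for the elementary dyadic quantity
\[
2^{sk_4}\,2^{k_4}\sum_{j_4\geq 0}2^{-j_4/2}\norm{\mathbf{1}_{D_{k_4,j_4}}\brk{f_{k_1}*f_{k_2}*f_{k_3}}}_{L^2},
\]
where the factor $2^{k_4}$ comes from the symbol $i\xi$ of $\partial_x$ and the weight $2^{-j_4/2}$ from the $(i+\tau-\xi^3)^{-1}$ in the $N^s$ norm. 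Since the right-hand side is symmetric in the three inputs, I would fix the ordering $k_1\geq k_2\geq k_3$ of the input frequencies; frequency localization forces $k_4\leq k_1+O(1)$, and I would distinguish the high-low regime $k_1\sim k_4$ from the high-high regime $k_1\sim k_2\gg k_4$.

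The analytic core is the resonance identity. Writing $\sigma_i=\tau_i-\xi_i^3$ for the input modulations and $\sigma_4=\tau-\xi^3$ for the output, with $\xi=\xi_1+\xi_2+\xi_3$ and $\tau=\tau_1+\tau_2+\tau_3$, one has
\[
\sigma_4-\sigma_1-\sigma_2-\sigma_3=-3(\xi_1+\xi_2)(\xi_2+\xi_3)(\xi_1+\xi_3),
\]
so that $\max(j_1,j_2,j_3,j_4)\gtrsim \log_2\aabs{(\xi_1+\xi_2)(\xi_2+\xi_3)(\xi_1+\xi_3)}$. This lower bound on the largest modulation is what must compensate the derivative loss $2^{k_4}$. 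The key technical step I would establish is an $L^2$ block estimate controlling $\norm{\mathbf{1}_{D_{k_4,j_4}}\brk{f_{k_1,j_1}*f_{k_2,j_2}*f_{k_3,j_3}}}_{L^2}$ by $\prod_{i=1}^3\norm{f_{k_i,j_i}}_{L^2}$ times an explicit power of $2$ in the $k_i$ and $j_i$ which, after inserting the resonance bound above, carries a net gain over $2^{k_4}$. This block estimate I would obtain either from the $L^4$ Airy--Strichartz estimate combined with an $L^2\times L^2\to L^2$ convolution bound, or directly by Cauchy--Schwarz after measuring the sizes of the support sets $D_{k_i,j_i}$ once the resonance relation has restricted the output region.

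Once the block estimate is in hand, I would carry out the summation in two stages. First, with the frequencies fixed, the modulation weights $2^{j_1/2}2^{j_2/2}2^{j_3/2}2^{-j_4/2}$ coming from the $X_k$ and $N^s$ norms, paired against the $L^2$ gains in $j$, form convergent geometric series in $j_1,\dots,j_4$, the resonance lower bound guaranteeing that the surviving powers of $2^{j_{\max}}$ are negative. Second, I would sum over $k_1,k_2,k_3,k_4$: the output weight $2^{(s+1)k_4}$ is matched by placing $2^{sk_1}$ on the high-frequency factor (which carries the $F^s$ norm), the residual negative powers of $2^{k_1}$ ensuring convergence, while the harmless factors $2^{k_2/4}2^{k_3/4}$ absorb the $F^{1/4}$ norms of the two low inputs; a final Cauchy--Schwarz in the frequency variables reproduces the $\ell^2$ structure of $\norm{\cdot}_{F^s}$.

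The main obstacle, and the place where the threshold $s\geq 1/4$ is sharp, is the high-high interaction in which two large frequencies nearly cancel, $\xi_1+\xi_2\approx 0$, so that the output frequency $k_4$ is small while $k_1\sim k_2$ is large. There the factor $(\xi_1+\xi_2)$ degenerates and the resonance no longer supplies a modulation gain; one must instead exploit the remaining factors $(\xi_2+\xi_3)(\xi_1+\xi_3)\sim 2^{2k_1}$ together with the smallness of the output frequency, balancing the derivative loss against precisely the $2^{k_i/4}$ afforded by the $F^{1/4}$ norms. Controlling this degenerate resonance while keeping every $j$- and $k$-sum convergent is the crux of the argument.
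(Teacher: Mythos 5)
Your proposal is correct and follows essentially the same route as the paper's own proof: dyadic decomposition in frequency and modulation, reduction to $L^2$ block bounds on $\norm{1_{\dot{D}_{k_4,j_4}}(f_{k_1,j_1}*f_{k_2,j_2}*f_{k_3,j_3})}_{L^2}$ (Lemma \ref{l42}, Corollary \ref{cor42}), the resonance identity $\Omega=-3(\xi_1+\xi_2)(\xi_2+\xi_3)(\xi_1+\xi_3)$, and a case analysis over frequency interactions (Propositions \ref{p51}--\ref{p55}), with the degenerate high--high case treated exactly as you indicate, by localizing $|\xi_1+\xi_2|\sim 2^{l}$ and playing the surviving factor $\sim 2^{l+2k_{\max}}$ of the resonance against the $F^{1/4}$ weights. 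The only cosmetic divergence is that the paper derives the balanced-frequency block estimate from the $L^6_{t,x}$ Airy--Strichartz bound with smoothing gain $2^{-k/6}$ (Lemma \ref{mstr}) rather than your $L^4$ alternative, which would need the same resonance input to close with identical numerology.
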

Now we prove a symmetric estimate which will be used to prove the
trilinear estimate, closely following the methods in \cite{Guo2}.
Similar ideas for the bilinear estimates can be found in
\cite{In-Ke}. For $\xi_1, \xi_2, \xi_3\in \R$ and $\omega:\R
\rightarrow \R$ defined as $\omega(\xi)=\xi^{3}$. Let
\begin{equation}\label{eq:reso}
\Omega(\xi_1,\xi_2,\xi_3)=\omega(\xi_1)+\omega(\xi_2)+\omega(\xi_3)-\omega(\xi_1+\xi_2+\xi_3).
\end{equation}
This is the resonance function that plays a crucial role in the
trilinear estimate of the $X^{s,b}$-type space, see \cite{Taokz} for
a perspective discussion. For compactly supported functions $f,g,h,u
\in L^2(\R\times \R)$. Let
\begin{eqnarray*}
&&J(f,g,h,u)=\int_{\R^6}f(\xi_1,\mu_1)g(\xi_2,\mu_2)h(\xi_3,\mu_3)\\
&&u(\xi_1+\xi_2+\xi_3,\mu_1+\mu_2+\mu_3+\Omega(\xi_1,\xi_2,\xi_3))d\xi_1d\xi_2d\xi_3d\mu_1d\mu_2d\mu_3.
\end{eqnarray*}

\begin{lemma}\label{l42}
Assume $k_1,k_2,k_3,k_4 \in \Z$,  $k_1\leq k_2\leq k_3\leq k_4$,
$j_1,j_2,j_3,j_4\in \Z_+$ and $f_{k_i,j_i}\in L^2(\R\times \R)$ are
nonnegative functions supported in $I_{k_i}\times
\widetilde{I}_{j_i}, \ i=1,\ 2,\ 3,\ 4$. For simplicity we write
$J=|J(f_{k_1,j_1},f_{k_2,j_2},f_{k_3,j_3},f_{k_4,j_4})|$.

(a) For any $k_1\leq k_2\leq k_3\leq k_4$ and $j_1,j_2,j_3,j_4\in
\Z_+$,
\begin{equation}
J\leq C 2^{(j_{min}+j_{thd})/2}2^{(k_{min}+k_{thd})/2}
\prod_{i=1}^4\norm{f_{k_i,j_i}}_{L^2}.
\end{equation}

(b) If $k_2\leq k_3-5$ and $j_2\neq j_{max}$,
\begin{equation}\label{eq:l41rb}
J\leq C
2^{(j_1+j_2+j_3+j_4)/2}2^{-j_{max}/2}2^{-k_{max}}2^{k_{min}/2}
\prod_{i=1}^4\norm{f_{k_i,j_i}}_{L^2};
\end{equation}
if $k_2\leq k_3-5$ and $j_2=j_{max}$,
\begin{equation}
J\leq C
2^{(j_1+j_2+j_3+j_4)/2}2^{-j_{max}/2}2^{-k_{max}}2^{k_{thd}/2}
\prod_{i=1}^4\norm{f_{k_i,j_i}}_{L^2}.
\end{equation}

(c) For any $k_1,k_2,k_3,k_4 \in \N$  and $j_1,j_2,j_3,j_4\in \Z_+$,
\begin{equation}
 J\leq C2^{(j_1+j_2+j_3+j_4)/2}2^{-j_{max}/2}2^{-(k_{1}+k_{2}+k_{3})/6}
 \prod_{i=1}^4\norm{f_{k_i,j_i}}_{L^2}.
 \end{equation}

(d) If $k_{min}\leq k_{max}-10$, then
\begin{equation}
J\leq C2^{(j_1+j_2+j_3+j_4)/2}2^{-3k_{max}/2}
\prod_{i=1}^4\norm{f_{k_i,j_i}}_{L^2}.
\end{equation}
Here we use $k_{max},k_{sec},k_{thd}$ and $k_{min}$ denote the
maximum, the second maximum , the third maximum number and the
 minimum of numbers $k_1,k_2,k_3$ and $k_4$. The notations
$j_{max},j_{sec},j_{thd}$ and $j_{min}$ are similar.
\end{lemma}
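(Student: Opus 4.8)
My plan is to pass to the convolution form of $J$ and reduce all four bounds to two bilinear $L^2$ estimates together with the sharp $L^6$ Airy--Strichartz inequality. Setting $\tau_i=\mu_i+\xi_i^3$ and $G_i(\xi_i,\tau_i)=f_{k_i,j_i}(\xi_i,\tau_i-\xi_i^3)$, Parseval rewrites $J$ as the convolution pairing
\[
J=\int_{\substack{\xi_1+\xi_2+\xi_3=\xi_4\\ \tau_1+\tau_2+\tau_3=\tau_4}}G_1\,G_2\,G_3\,G_4,
\]
where each $G_i$ is supported in $\{\xi_i\in I_{k_i},\ \tau_i-\xi_i^3\in\widetilde{I}_{j_i}\}$, so that the support relation reads $\mu_4=\mu_1+\mu_2+\mu_3+\Omega$ with $\Omega=-3(\xi_1+\xi_2)(\xi_2+\xi_3)(\xi_3+\xi_1)$. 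The first building block is the box-counting bound $\norm{G_a*G_b}_{L^2}\lesssim 2^{\min(k_a,k_b)/2}2^{\min(j_a,j_b)/2}\norm{G_a}_{L^2}\norm{G_b}_{L^2}$, obtained by Cauchy--Schwarz on the overlap region $R(\xi,\tau)$ and bounding $|R|$ by the product of the side lengths of the two intersected boxes. The second is the dispersive bound $\norm{G_a*G_b}_{L^2}\lesssim 2^{(j_a+j_b)/2}\,(|\xi_a-\xi_b|\,|\xi_a+\xi_b|)^{-1/2}\norm{G_a}_{L^2}\norm{G_b}_{L^2}$, valid when the phase $\xi\mapsto\xi^3+(\sigma-\xi)^3$ ($\sigma$ the output frequency) is non-degenerate; here the $\xi$-fibre of $R$ has measure $\lesssim 2^{\max(j_a,j_b)}/|\Phi'|$ with $\Phi'=3(\xi_a-\xi_b)(\xi_a+\xi_b)$. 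Finally I record $\norm{\ft^{-1}G_i}_{L^6_{x,t}}\lesssim 2^{j_i/2}2^{-k_i/6}\norm{G_i}_{L^2}$, which follows from $\norm{e^{t\partial_x^3}P_k\phi}_{L^6}\lesssim 2^{-k/6}\norm{\phi}_{L^2}$ and Cauchy--Schwarz in the modulation variable.

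For (a) I would write $J\le\norm{G_a*G_b}_{L^2}\norm{G_c*G_d}_{L^2}$ for a partition into two pairs and apply the box-counting bound to both factors, producing $2^{(\min(k_a,k_b)+\min(k_c,k_d))/2}2^{(\min(j_a,j_b)+\min(j_c,j_d))/2}$. It then suffices to choose the partition so that the two smallest $k$'s lie in different blocks and the two smallest $j$'s lie in different blocks; among the two pairings that separate the minimal pair of frequencies, at least one also separates the minimal pair of modulations, which gives exactly $2^{(k_{min}+k_{thd})/2}2^{(j_{min}+j_{thd})/2}$.

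For (b), (c), (d) the mechanism is to route the factors through the right bilinear input. In (b) the gap $k_2\le k_3-5$ forces $k_3\sim k_4\sim k_{max}$ and splits the indices into a low pair $\{1,2\}$ and a high pair $\{3,4\}$; I would apply the box-counting bound to the pair carrying the maximal modulation and the dispersive bound to a frequency-separated pair (one low, one high), where $|\xi_a\pm\xi_b|\sim 2^{k_{max}}$ yields the gain $2^{-k_{max}}$. Because box-counting charges only the smaller modulation of its block, the three non-maximal modulations survive, giving $2^{(j_1+j_2+j_3+j_4)/2-j_{max}/2}$; the alternative $j_2=j_{max}$ is precisely the case where the maximal-modulation factor is the low frequency $k_2=k_{thd}$ and must be paired against a high frequency, replacing $2^{k_{min}/2}$ by $2^{k_{thd}/2}$. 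For (c) I would apply H\"older with exponents $(6,6,6,2)$, placing $L^2$ on the maximal-modulation factor and Strichartz on the other three; since $\{k_1,k_2,k_3\}$ realises the smallest three-element frequency sum, the resulting gain is bounded by $2^{-(k_1+k_2+k_3)/6}$ and the modulation factor by $2^{(j_1+j_2+j_3+j_4)/2-j_{max}/2}$. For (d) the gap $k_{min}\le k_{max}-10$ makes the pair containing $k_{min}$ and $k_{max}$ non-degenerate and, via the relation $\xi_4=\xi_1+\xi_2+\xi_3$, guarantees a perfect matching into two non-degenerate pairs; applying the dispersive bound to both yields the claimed $2^{-3k_{max}/2}$ (and often better).

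The main obstacle is the non-degeneracy analysis feeding the dispersive bound: verifying on the chosen pair that neither $|\xi_a-\xi_b|$ nor $|\xi_a+\xi_b|$ is anomalously small, and controlling $|\{\,|\Phi|\le 2^{\max(j_a,j_b)}\}|$ through $|\Phi'|$, together with the combinatorial bookkeeping that selects, for each ordering of the $k_i$ and each location of $j_{max}$, which pairing and which building block to use. Once the two bilinear estimates and the Strichartz inequality are established, parts (a)--(d) follow from this case analysis.
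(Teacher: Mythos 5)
Your reduction of all four parts to two bilinear block estimates (box-counting and dispersive, with Jacobian $|\Phi'|=3|\xi_a-\xi_b|\,|\xi_a+\xi_b|$) plus the $L^6$ Airy--Strichartz bound is a legitimate route, and for parts (a)--(c) it works. For (a), the paper instead integrates the two smallest modulations and then Cauchy--Schwarzes away the two smallest frequency supports directly, which is the unpaired version of your argument; your combinatorial point -- that two $2$-element families among the three perfect matchings of $\{1,2,3,4\}$ must share a matching -- is correct and exactly what is needed. For (b), your routing (box-counting on the pair containing $j_{max}$, dispersive on the complementary mixed pair, where $k_2\leq k_3-5$ forces $|\xi_a\pm\xi_b|\sim 2^{k_{max}}$) reproduces both cases, including the $j_2=j_{max}$ case where $2^{k_{thd}/2}=2^{k_2/2}$ replaces $2^{k_{min}/2}$; the paper packages the same fibre-measure computation as a change of variables $\mu=\Omega$ with $|\partial_{\xi}\Omega|\sim 2^{2k_3}$ inside the quadrilinear form, so the analytic content is identical. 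Your (c) coincides with the paper's proof, including the observation that $k_1+k_2+k_3$ is the minimal three-term sum, which justifies placing the $L^2$ norm on the maximal-modulation factor.

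Part (d), however, contains a genuine gap: the claim that $k_{min}\leq k_{max}-10$ ``guarantees a perfect matching into two non-degenerate pairs'' is false as stated, and no single matching can be selected from the dyadic data alone. With $k_1\leq k-10$ and $k_2=k_3=k_4=k$, the configuration $\xi_2=\xi_3\approx 0.9\cdot 2^k$, $\xi_4\approx 1.8\cdot 2^k$ makes the matching $\{(1,4),(2,3)\}$ degenerate (the factor $\xi_2-\xi_3$ kills the Jacobian on $(2,3)$), while the configuration $\xi_2\approx -0.6\cdot2^k$, $\xi_4\approx 0.6\cdot 2^k$, $\xi_3\approx 1.2\cdot 2^k$ makes $\{(1,3),(2,4)\}$ degenerate ($\xi_2+\xi_4\approx 0$); both configurations live over the same tuple $(k_1,\dots,k_4)$. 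So the choice of matching must depend on the actual frequencies, which forces a further decomposition of the integration region that your proposal does not supply (you flag non-degeneracy as ``the main obstacle'' but give no argument). The decomposition does exist and is short: the convolution constraint gives $\xi_4-\xi_2=\xi_1+\xi_3$ and $\xi_4-\xi_1=\xi_2+\xi_3$, hence $|\xi_4-\xi_2|\gtrsim 2^{k_{max}}$ and $|\xi_2+\xi_3|\gtrsim 2^{k_{max}}$ always, and $|\xi_2-\xi_3|\leq 2^{k_{max}-8}$ together with $|\xi_2+\xi_4|\leq 2^{k_{max}-8}$ would force $|\xi_1|\gtrsim 2^{k_{max}}$, contradicting $k_1\leq k_{max}-10$; splitting on which of these two quantities is $\geq 2^{k_{max}-8}$ and using $\{(1,4),(2,3)\}$ or $\{(1,3),(2,4)\}$ accordingly yields $2^{-2k_{max}}$, which implies the stated $2^{-3k_{max}/2}$ in the regime $k_{max}\geq 0$ where the lemma is applied (the paper's own proof of (d) has the same feature).

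For contrast, the paper avoids the degenerate-angle analysis altogether in (d): it splits on the sign of $\xi_1\xi_2$. When $\xi_1\xi_2>0$, the resonance identity $\Omega=3(\xi_1+\xi_2)(\xi_1+\xi_3)(\xi_1+\xi_4)$ gives $|\Omega|\gtrsim 2^{k_2+2k_3}$, hence $j_{max}\geq k_2+2k_3-20$, and (d) follows from parts (b) and (c); when $\xi_1\xi_2<0$ it localizes $|\xi_1+\xi_2|\sim 2^l$, uses $j_{max}\geq l+2k_3-20$ together with a single change of variables of Jacobian $\sim 2^{2k_3}$, and sums the factors $2^{l/2}2^{-k_3}$ over $l$. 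Your double-dispersive scheme trades this resonance bookkeeping for the region decomposition above; either completes (d), but as written your proposal has asserted rather than proved the crucial step.
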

\begin{proof}
Let $A_{k_i}(\xi)=[\int_\mathbb{R}
\abs{f_{k_i,j_i}(\xi,\mu)}^{2}d\mu]^{\frac{1}{2}}, i=1,2,3,4$, then
$\norm{A_{k_i}}_{L^{2}_{\xi}}=\norm{f_{k_{i},j_{i}}}_{L^{2}_{\xi,\mu}}$.
Using the Cauchy-Schwartz inequality and the support properties of
the functions $f_{k_i,j_i}$,
\begin{eqnarray*}
&&|J(f_{k_1,j_1},f_{k_2,j_2},f_{k_3,j_3},f_{k_4,j_4})|\\
&\leq& C
2^{(j_{min}+j_{thd})/2}\int_{\R^3}A_{k_1}(\xi_1)A_{k_2}(\xi_1)A_{k_3}(\xi_1)A_{k_4}(\xi_1+\xi_2+\xi_3)d\xi_1d\xi_2d\xi_3\\
&\leq& C
2^{(k_{min}+k_{thd})/2}2^{(j_{min}+j_{thd})/2}\prod_{i=1}^4\norm{A_{k_i,j_i}}_{L^2},
\end{eqnarray*}
which is part (a), as desired.

For part (b), by examining the supports of the functions,
$J(f_{k_1,j_1},f_{k_2,j_2},f_{k_3,j_3},f_{k_4,j_4})\equiv 0$ unless
\begin{equation}\label{eq:freeq}
k_4\leq k_3+5.
\end{equation}
Simple changes of variables in the integration and the observation
that the function $\omega$ is odd show that
\begin{eqnarray}\label{ss}
|J(f,g,h,u)|=|J(g,f,h,u)|=|J(f,h,g,u)|=|J(\widetilde{f},\widetilde{g},u,h)|,
\end{eqnarray}
where $\widetilde{f}(\xi,\mu)=f(-\xi,-\mu),
\widetilde{g}(\xi,\mu)=g(-\xi,-\mu)$. We assume first that $j_2\neq
j_{max}$ and $j_4=j_{max}$, then we will prove that if
$g_i:\R\rightarrow \R_+$ are $L^2$ nonnegative functions supported
in $I_{k_i}$, $i=1,2,3$ and $g: \R^2\rightarrow \R_+$ is an $L^2$
function supported in $I_{k_4}\times \widetilde{I}_{j_{4}}$, then
\begin{eqnarray}\label{eq:l41b}
&&\int_{\R^3}g_1(\xi_1)g_2(\xi_2)g_3(\xi_3)g(\xi_1+\xi_2+\xi_3,\Omega(\xi_1,\xi_2,\xi_3))d\xi_1d\xi_2d\xi_3\nonumber\\
&&\les
2^{(j_1+j_2+j_3)}2^{-k_{max}}2^{k_{min}/2}\norm{g_1}_{L^2}\norm{g_2}_{L^2}\norm{g_3}_{L^2}\norm{g}_{L^2}.
\end{eqnarray}
This suffices for \eqref{eq:l41rb}.

To prove \eqref{eq:l41b}, we first observe that since $k_2\leq
k_3-5$ then $|\xi_3+\xi_2|\sim |\xi_3|$. By change of variables
$\xi'_1=\xi_1$,  $\xi'_2=\xi_2$,  $\xi'_3=\xi_2+\xi_3$, we get that
the left side of \eqref{eq:l41b} is dominated by
\begin{eqnarray}\label{eq:l41b2}
&&\int_{|\xi'_1|\sim 2^{k_1},|\xi'_2|\sim 2^{k_2},|\xi'_3|\sim
2^{k_3}}g_1(\xi'_1)g_2(\xi'_2)\nonumber\\
&&g_3(\xi'_3-\xi'_2)g(\xi'_1+\xi'_3,\Omega(\xi'_1,\xi'_2,\xi'_3-\xi'_2))d\xi'_1d\xi'_2d\xi'_3.
\end{eqnarray}
Note that in the integration area we have
\begin{eqnarray*}
\big|\frac{\partial}{\partial_{\xi'_2}}\left[\Omega(\xi'_1,\xi'_2,\xi'_3-\xi'_2)\right]\big|=|\omega'(\xi'_2)-\omega'(\xi'_3-\xi'_2)|\sim
2^{2k_3},
\end{eqnarray*}
where we use the fact $\omega'(\xi)\sim|\xi|^{2}$ and $k_2\leq
k_3-5$. So we have
$\norm{g(\xi'_{1}+\xi'_{3},\Omega(\xi'_{1},\xi'_{2},\xi'_{3}-\xi'_{2}))}_{L^{2}_{\xi'_{2}}}=2^{-k_{3}}\norm{g(\xi'_{1}+\xi'_{3},\mu_{2})}_{L^{2}_{\mu_{2}}}$.
By change of variable $\mu_2=\Omega(\xi'_1,\xi'_2,\xi'_3-\xi'_2)$,
we get that \eqref{eq:l41b2} is dominated by
\begin{eqnarray}
&&2^{-k_3}\int_{|\xi'_1|\sim
2^{k_1}}g_1(\xi'_1)\norm{g_2}_{L^2}\norm{g_3}_{L^2}\norm{g}_{L^2}d\xi'_1 \nonumber\\
&\les&2^{-k_{max}}2^{k_{min}/2}\norm{g_1}_{L^2}\norm{g_2}_{L^2}\norm{g_3}_{L^2}\norm{g}_{L^2}.
\end{eqnarray}

If $j_3=j_{max}$,  this case is identical to the case $j_4=j_{max}$
in view of \eqref{ss}. If $j_1=j_{max}$, similar to \eqref{eq:l41b},
it suffices to prove $g_i:\R\rightarrow \R_+$ are $L^2$  nonnegative
functions supported in $I_{k_i}$, $i=2,3,4$  and $g: \R^2\rightarrow
\R_+$ is an $L^2$ nonnegative function supported in $I_{k_1}\times
\widetilde{I}_{j_1}$, then
\begin{eqnarray}\label{eq:l41b3}
&&\int_{\R^3}g_2(\xi_2)g_3(\xi_3)g_4(\xi_4)g(\xi_2+\xi_3+\xi_4,\Omega(\xi_2,\xi_3,\xi_4))d\xi_2d\xi_3d\xi_4\nonumber\\
&&\les
2^{-k_{max}}2^{k_{min}/2}\norm{g_2}_{L^2}\norm{g_3}_{L^2}\norm{g_4}_{L^2}\norm{g}_{L^2}.
\end{eqnarray}

Indeed, by change of variables $\xi'_2=\xi_2,  \xi'_3=\xi_3,
 \xi'_4=\xi_2+\xi_3+\xi_4$ and the observation that in the area
$|\xi'_2|\sim 2^{k_2},|\xi'_3|\sim 2^{k_3},|\xi'_4|\sim 2^{k_1}$,
\begin{eqnarray*}
\big|\frac{\partial}{\partial_{\xi'_2}}\left[\Omega(\xi'_2,\xi'_3,\xi'_4-\xi'_2-\xi'_3)\right]\big|=|\omega'(\xi'_2)-\omega'(\xi'_4-\xi'_2-\xi'_3)|\sim
2^{2k_3},
\end{eqnarray*}
we get from Cauchy-Schwartz inequality that
\begin{eqnarray}
&&\int_{\R^3}g_2(\xi_2)g_3(\xi_3)g_4(\xi_4)g(\xi_2+\xi_3+\xi_4,\Omega(\xi_2,\xi_3,\xi_4))d\xi_2d\xi_3d\xi_4\nonumber\\
&\les& \int_{|\xi'_2|\sim 2^{k_2},|\xi'_3|\sim 2^{k_3}, |\xi'_4|\sim
2^{k_1}}g_2(\xi'_2)g_3(\xi'_3)\nonumber\\
&&\quad \cdot
g_4(\xi'_4-\xi'_2-\xi'_3)g(\xi'_4,\Omega(\xi'_2,\xi'_3,\xi'_4-\xi'_2-\xi'_3))d\xi'_2d\xi'_3d\xi'_4\nonumber\\
&\les&2^{-k_3}\int_{|\xi'_3|\sim 2^{k_3}, |\xi'_4|\sim
2^{k_1}}g_3(\xi'_3)\norm{g_2(\xi'_2)g_4(\xi'_4-\xi'_2-\xi'_3)}_{L_{\xi'_2}^2}\norm{g(\xi'_4,\cdot)}_{L_{\xi'_2}^2}d\xi'_3d\xi'_4\nonumber\\
&\les&2^{-k_{max}}2^{k_{min}/2}\norm{g_2}_{L^2}\norm{g_3}_{L^2}\norm{g_4}_{L^2}\norm{g}_{L^2}.
\end{eqnarray}

We assume now that $j_2=j_{max}$. This case is identical to the
case $j_1=j_{max}$ in view of \eqref{ss}. We note that we actually
prove that if
 $k_2\leq k_3-5$ then
\begin{eqnarray}\label{eq:l41bb}
J\leq C
2^{(j_1+j_3+j_4)/2}2^{-k_{max}}2^{k_{thd}/2}\prod_{i=1}^4\norm{f_{k_i,j_i}}_{L^2}.
\end{eqnarray}
Therefore, we complete the proof of part (b).

For part (c), setting
$f^{\sharp}_{k_i,j_i}(\xi,\tau)=f_{k_i,j_i}(\xi,
\tau-\omega(\xi))$, $i=1,2,3,4 $,  then we get
\begin{eqnarray*}
&&|J(f_{k_1,j_1},f_{k_2,j_2},f_{k_3,j_3},f_{k_4,j_4})|\\
&=&\left|\int_{\R^{6}}\prod_{i=1}^{3}f^\sharp_{k_i,j_i}(\xi_{i},\tau_{i})f^\sharp_{k_4,j_4}(\xi_{1}+\xi_{2}+\xi_{3},\tau_{1}+\tau_{2}+\tau_{3})d\xi_{1}d\xi_{2}d\xi_{3}d\tau_{1}d\tau_{2}d\tau_{3}\right|
\end{eqnarray*}
Making variables change
$\xi=\xi_{1}+\xi_{2}+\xi_{3},\tau=\tau_{1}+\tau_{2}+\tau_{3}$, we
have
\begin{eqnarray*}
&&|J(f_{k_1,j_1},f_{k_2,j_2},f_{k_3,j_3},f_{k_4,j_4})|\\
&=&\left|\int_{\R^2}f^\sharp_{k_1,j_1}*f^\sharp_{k_2,j_2}*f^\sharp_{k_3,j_3}(\xi,\tau)\cdot f^\sharp_{k_4,j_4}(\xi,\tau)d\xi d\mu\right|\\
&\les& \norm{f^\sharp_{k_1,j_1}\ast f^\sharp_{k_2,j_2}\ast
f^\sharp_{k_3,j_3}}_{L^2} \norm{f^\sharp_{k_4,j_4}}_{L^2}\\
&\les&\prod_{i=1}^{3}\norm{\ft^{-1}f^\sharp_{k_i,j_i}}_{L^{6}_{t,x}}\norm{f^\sharp_{k_4,j_4}}_{L^{2}_{t,x}}
\end{eqnarray*}
On the other hand
\begin{eqnarray*}
\ft^{-1}(f^\sharp_{k_i,j_i})&=&\int_{\R^2}f_{k_i,j_i}(\xi,
\tau-\omega(\xi))e^{ix\xi}e^{it\tau}d\xi d\tau\\
&=&\int_{\R^2} f_{k_i,j_i}(\xi,
\tau)e^{ix\xi}e^{it\omega(\xi)}e^{it\tau}d\xi d\tau,
\end{eqnarray*}
If we can show that
\begin{eqnarray}
\norm{\ft^{-1}f^\sharp_{k_i,j_i}}_{L^{6}_{t,x}}
&\les&\int_{\R}\norm{\int_{\R}f_{k_{i},j_{i}}(\xi,\tau)e^{ix\xi}e^{it\omega(\xi)}d\xi}_{L^{6}_{t,x}}d\tau\nonumber\\
&\les&2^{j_{i}/2}2^{-k_{i}/6}\norm{f_{k_{i},j_{i}}}_{L^2}\label{sstr}
\end{eqnarray}
then part (c) follows by  symmetry.

  Now we give the proof of (\ref{sstr}) by using a result of Wang \cite{Wangbook}.
\begin{lemma}\label{mstr}
Let $U_{m}=e^{it(-\Delta)^{m/2}}, m\geq2$, define
\begin{align}
m^{*}=\left\{
\begin{array}{ll}
\infty, & n\leq m,\\
2n/(n-m)
 , & n>m,
\end{array}
\right.
 \label{defm}
\end{align}
and
\begin{equation}
\frac{2}{\gamma(\cdot)}=n(\frac{1}{2}-\frac{1}{\cdot}),
\end{equation}
\begin{equation}
2\sigma(m,\cdot)=n(2-m)(\frac{1}{2}-\frac{1}{\cdot}).
\end{equation}

For $2\leq r, p <2^{*}$, we have
\begin{eqnarray}
\norm{U_{m}(t)\phi}_{L^{\gamma(p)}(I,\dot{B}^{s-\sigma(m,p)}_{p,2})}&\lesssim&
\norm{\phi}_{\dot{H}^{s}}\\
\norm{A_{U_{m}}(t)f}_{L^{\gamma(p)}(I,\dot{B}^{s-\sigma(m,p)}_{p,2})}&\lesssim&
\norm{f}_{L^{\gamma(r)'}(I,\dot{B}^{s+\sigma(m,r)}_{r',2})}
\end{eqnarray}
Here $I \subset
 \R$ is a any interval, $A_{U_{m}}:=\int_{0}^{t}U_{m}(t-\tau)\cdot
 d\tau$.
\end{lemma}
Choosing $m=3, n=1 ,s=0, p=6$ and $r=2$ in Lemma \ref{mstr}, we get
\eqref{sstr}.

 For part (d), we need to consider two cases: $\xi_1\cdot \xi_2>0$
or $\xi_1\cdot \xi_2<0$. Observing that if we let
$\xi_{4}=-(\xi_{1}+\xi_{2}+\xi_{3})$, then
$\Omega(\xi_{1},\xi_{2},\xi_{3})=3(\xi_{1}+\xi_{2})(\xi_{1}+\xi_{3})(\xi_{1}+\xi_{4})$.
The former case is easier to handle. When $\xi_1\cdot \xi_2>0$,
because of $k_{1}\leq k_{4}-10$,  we have
$\Omega(\xi_{1},\xi_{2},\xi_{3})\geq2^{k_{2}}2^{k_{3}}2^{k_{4}}\sim2^{k_{2}+2k_{3}}$.

If  $k_{2}\leq k_{3}-5$, notice that $j_{max}\geq k_{2}+2k_{3}-20$,
part (d) holds by part (b).

 If $k_{2}\geq k_{3}-5$, observing $-\frac{k_{1}+k_{2}+k_{3}}{6}\sim -\frac{k_{1}+2k_{3}}{6}$ and owning to $j_{max}\geq
k_{2}+2k_{3}-20$, part (d) holds by part (c).

 We assume now
$\xi_1\cdot \xi_2<0$. Now we should consider serval cases according
to $j_{i}=j_{max}$. If $j_4=j_{max}$, it suffices to prove that if
$A_i$ is $L^2$ nonnegative functions supported in $I_{k_i}$,
$i=1,2,3$  and $B$ is a $L^2$ nonnegative function supported in
$I_{k_4}\times \widetilde{I}_{j_4}$,  then
\begin{eqnarray}\label{eq:l41d1}
&&\int_{\R^3\cap \{\xi_1\cdot
\xi_2<0\}}A_1(\xi_1)A_2(\xi_2)A_3(\xi_3)B(\xi_1+\xi_2+\xi_3,
\Omega(\xi_1,\xi_2,\xi_3))d\xi_1d\xi_2d\xi_3\nonumber\\
&\les&
2^{j_4/2}2^{-2k_3}\norm{A_1}_{L^2}\norm{A_2}_{L^2}\norm{A_3}_{L^2}\norm{B}_{L^2}.
\end{eqnarray}
By localizing $|\xi_1+\xi_2|\sim 2^l$ for $l\in \Z$, we get that the
right-hand side of \eqref{eq:l41d1} is dominated by
\begin{eqnarray}\label{eq:l41d2}
\sum_{l}\int_{\R^3}\chi_{l}(\xi_1+\xi_2)A_1(\xi_1)A_2(\xi_2)A_3(\xi_3)B(\xi_1+\xi_2+\xi_3,
\Omega(\xi_1,\xi_2,\xi_3))d\xi_1d\xi_2d\xi_3.
\end{eqnarray}
From the support properties of the functions $A_i,\ B$ and the fact
that in the integration area
\[|\Omega(\xi_1,\xi_2,\xi_3)|=|3(\xi_1+\xi_2)(\xi_1+\xi_3)(\xi_2+\xi_3)|\sim
2^{l+2k_3},\] We get that
\begin{equation}\label{eq:l41dsi}
j_{max}\geq l+2k_3-20.
\end{equation}
By change of variables $\xi_1'=\xi_1+\xi_2$, $\xi_2'=\xi_2$,
$\xi_3'=\xi_1+\xi_3$,  we obtain that \eqref{eq:l41d2} is dominated
by
\begin{eqnarray}\label{eq:l41d3}
&&\sum_{l}\int_{|\xi_1'|\sim 2^l,|\xi_2'|\sim 2^{k_2},|\xi_3'|\sim 2^{k_3}}\chi_{l}(\xi_1')A_1(\xi_1'-\xi_2')A_2(\xi_2')A_3(\xi_2'+\xi_3'-\xi_1')\nonumber\\
&&\quad B(\xi_2'+\xi_3',
\Omega(\xi_1'-\xi_2',\xi_2',\xi_2'+\xi_3'-\xi_1'))d\xi_1'd\xi_2'd\xi_3'.
\end{eqnarray}
Since in the integration area
\begin{eqnarray}\label{eq:l41djo}
\big|\frac{\partial}{\partial_{\xi_1'}}[\Omega(\xi_1'-\xi_2',\xi_2',\xi_2'+\xi_3'-\xi_1')]\big|
=|\omega'(\xi_1'-\xi_2')-\omega'(\xi_2'+\xi_3'-\xi_1')|\sim2^{2k_3},
\end{eqnarray}
then we get from \eqref{eq:l41djo} that \eqref{eq:l41d3}  is
dominated by
\begin{eqnarray}
&&\sum_{l}\int_{|\xi_1'|\sim 2^l}\chi_{l}(\xi_1')\norm{A_1}_{L^2}\norm{A_3}_{L^2}\nonumber\\
&&\quad \norm{A_2(\xi_2')B(\xi_2'+\xi_3',
\Omega(\xi_1'-\xi_2',\xi_2',\xi_2'+\xi_3'-\xi_1'))}_{L^2_{\xi_2',\xi_3'}}d\xi_1'\nonumber\\
&\les&\sum_{l}2^{l/2}2^{-k_3}\norm{A_1}_{L^2}\norm{A_2}_{L^2}\norm{A_3}_{L^2}\norm{B}_{L^2}\nonumber\\
&\les&
2^{j_{max}/2}2^{-2k_3}\norm{A_1}_{L^2}\norm{A_2}_{L^2}\norm{A_3}_{L^2}\norm{B}_{L^2},
\end{eqnarray}
where we used \eqref{eq:l41dsi} in the last inequality. From
symmetry we know the case $j_3=j_{max}$ is identical to the case
$j_4=j_{max}$;  the case $j_1=j_{max}$ is identical to the case
$j_2=j_{max}$. Thus it reduces to prove the case $j_2=j_{max}$. It
suffices to prove that if $A_i$ is $L^2$ nonnegative functions
supported in $I_{k_i}$, $i=1,3,4$ and $B$ is a $L^2$ nonnegative
function supported in $I_{k_2}\times \widetilde{I}_{j_2}$, then
\begin{eqnarray}\label{eq:l41dc21}
&&\int_{\R^3\cap \{\xi_1\cdot
\xi_2<0\}}A_1(\xi_1)A_3(\xi_3)A_4(\xi_4)B(\xi_1+\xi_3+\xi_4,
\Omega(\xi_1,\xi_3,\xi_4))d\xi_1d\xi_3d\xi_4\nonumber\\
&\les&
2^{j_2/2}2^{-2k_{3}}\norm{A_1}_{L^2}\norm{A_4}_{L^2}\norm{A_3}_{L^2}\norm{B}_{L^2}.
\end{eqnarray}
As the case $j_4=j_{max}$, we get that the right-hand side of
\eqref{eq:l41dc21} is dominated by
\begin{eqnarray}\label{eq:l41dc22}
\sum_{l}\int_{\R^3}\chi_{l}(\xi_3+\xi_4)A_1(\xi_1)A_4(\xi_4)A_3(\xi_3)B(\xi_1+\xi_4+\xi_3,
\Omega(\xi_1,\xi_3,\xi_4))d\xi_1d\xi_4d\xi_3.
\end{eqnarray}
From the support properties of the functions $A_i,\ B$ and the
fact that in the integration area
\[|\Omega(\xi_1,\xi_2,\xi_3)|=|3(\xi_1+\xi_4)(\xi_2+\xi_4)(\xi_3+\xi_4)|\sim 2^{l+2k_3},\]
We get that
\begin{equation}\label{eq:l41dc2si}
j_{max}\geq l+2k_3-20.
\end{equation}
By changing variables $\xi_1'=\xi_1+\xi_3$, $\xi_3'=\xi_3+\xi_4$,
$\xi_4'=\xi_1+\xi_3+\xi_4$, we obtain that \eqref{eq:l41dc22} is
dominated by
\begin{eqnarray}\label{eq:l41dc23}
&&\sum_{l}\int_{|\xi_3'|\sim 2^l,|\xi_4'|\sim 2^{k_2},|\xi_1'|\sim 2^{k_3}}\chi_{l}(\xi_3')A_1(\xi_4'-\xi_3')A_3(\xi_1'+\xi_3'-\xi_4')A_4(\xi_4'-\xi_1')\nonumber\\
&&\quad
B(\xi_4',\Omega(\xi_4'-\xi_3',\xi_1'+\xi_3'-\xi_4',\xi_4'-\xi_1'))d\xi_1'd\xi_3'd\xi_4'.
\end{eqnarray}
Since in the integration area,
\begin{eqnarray}\label{eq:l41dc2jo}
&&\big|\frac{\partial}{\partial_{\xi_3'}}[\Omega(\xi_4'-\xi_3',\xi_1'+\xi_3'-\xi_4',\xi_4'-\xi_1')]\big|\nonumber\\
&=&|-\omega'(\xi_4'-\xi_3')+\omega'(\xi_1'+\xi_3'-\xi_4')|\sim
2^{2k_3},
\end{eqnarray}
then we get from \eqref{eq:l41dc2jo} that \eqref{eq:l41dc23} is
dominated by
\begin{eqnarray}
&&\sum_{l}\int_{|\xi_3'|\sim 2^l}\chi_{l}(\xi_3')\norm{A_1}_{L^2}\norm{A_3}_{L^2}\nonumber\\
&&\quad \norm{A_4(\xi_4'-\xi_1')B(\xi_4',
\Omega(\xi_4'-\xi_3',\xi_1'+\xi_3'-\xi_4',\xi_4'-\xi_1'))}_{L^2_{\xi_1',\xi_4'}}d\xi_3'\nonumber\\
&\les&\sum_{l}2^{l/2}2^{-k_3}\norm{A_1}_{L^2}\norm{A_3}_{L^2}\norm{A_4}_{L^2}\norm{B}_{L^2}\nonumber\\
&\les&
2^{j_{max}/2}2^{-2k_3}\norm{A_1}_{L^2}\norm{A_3}_{L^2}\norm{A_4}_{L^2}\norm{B}_{L^2},
\end{eqnarray}
where we used \eqref{eq:l41dc2si} in the last inequality. Therefore,
we complete the proof of part (d).
\end{proof}
We restate  Lemma \ref{l42} in a form that is suitable for the
trilinear estimates in the next sections.
\begin{corollary}\label{cor42}
Assume $k_1,k_2,k_3,k_4\in \Z$, $j_1,j_2,j_3,j_4\in \Z_+$ and
$f_{k_i,j_i}\in L^2(\R\times \R)$ are functions supported in
$\dot{D}_{k_i,j_i}$, $i=1,2,3$.

(a) For any $k_1,k_2,k_3,k_4\in \Z$ and $j_1,j_2,j_3,j_4\in \Z_+$,
\begin{eqnarray}
&&\norm{1_{\dot{D}_{k_4,j_4}}(\xi,\tau)(f_{k_1,j_1}*f_{k_2,j_2}*f_{k_3,j_3})}_{L^2}\nonumber\\
&\leq&
C2^{(k_{min}+k_{thd})/2}2^{(j_{min}+j_{thd})/2}\prod_{i=1}^3\norm{f_{k_i,j_i}}_{L^2}.
\end{eqnarray}
(b) For any $k_1,k_2,k_3,k_4\in \Z$ with $k_{thd}\leq k_{sec}-5$ and
$j_1,j_2,j_3,j_4\in \Z_+$. If for some $i\in \{1,2,3,4\}$ such that
$(k_i,j_i)=(k_{thd},j_{max})$, then
\begin{eqnarray}
&&\norm{1_{\dot{D}_{k_4,j_4}}(\xi,\tau)(f_{k_1,j_1}*f_{k_2,j_2}*f_{k_3,j_3})}_{L^2}\nonumber\\
&\leq&
C2^{-k_{max}}2^{-k_{thd/2}}2^{(j_{1}+j_{2}+j_3+j_4)/2}2^{-j_{max}/2}\prod_{i=1}^3\norm{f_{k_i,j_i}}_{L^2},
\end{eqnarray}

else we have
\begin{eqnarray}
&&\norm{1_{\dot{D}_{k_4,j_4}}(\xi,\tau)(f_{k_1,j_1}*f_{k_2,j_2}*f_{k_3,j_3})}_{L^2}\nonumber\\
&\leq&
C2^{-k_{max}}2^{k_{min}/2}2^{(j_{1}+j_{2}+j_3+j_4)/2}2^{-j_{max}/2}\prod_{i=1}^3\norm{f_{k_i,j_i}}_{L^2}.
\end{eqnarray}
(c) For any $k_1,k_2,k_3,k_4\in \N$ and $j_1,j_2,j_3,j_4\in \Z_+$,
\begin{eqnarray}
&&\norm{1_{\dot{D}_{k_4,j_4}}(\xi,\tau)(f_{k_1,j_1}*f_{k_2,j_2}*f_{k_3,j_3})}_{L^2}\nonumber\\
&\leq&
C2^{(j_{1}+j_{2}+j_3+j_4)/2}2^{-j_{max}/2}2^{-(k_{min}+k_{thd}+k_{sec})/6}\prod_{i=1}^3\norm{f_{k_i,j_i}}_{L^2}.
\end{eqnarray}
(d) For any $k_1,k_2,k_3,k_4\in \Z$ with $k_{min}\leq k_{max}-10$
and $j_1,j_2,j_3,j_4\in \Z_+$,
\begin{eqnarray}
&&\norm{1_{\dot{D}_{k_4,j_4}}(\xi,\tau)(f_{k_1,j_1}*f_{k_2,j_2}*f_{k_3,j_3})}_{L^2}\nonumber\\
&\leq&
C2^{(j_{1}+j_{2}+j_3+j_4)/2}2^{-3k_{max}/2}\prod_{i=1}^3\norm{f_{k_i,j_i}}_{L^2}.
\end{eqnarray}
\end{corollary}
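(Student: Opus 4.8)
The plan is to derive Corollary~\ref{cor42} from Lemma~\ref{l42} by pure duality; no new estimation is needed, since all the analytic content already sits in the quadrilinear bounds for $J$. First I would dualize the left-hand side: for each part, write
\[
\norm{1_{\dot{D}_{k_4,j_4}}(f_{k_1,j_1}*f_{k_2,j_2}*f_{k_3,j_3})}_{L^2}
=\sup\left|\int (f_{k_1,j_1}*f_{k_2,j_2}*f_{k_3,j_3})(\xi,\tau)\,\overline{h(\xi,\tau)}\,d\xi\,d\tau\right|,
\]
the supremum running over $h\in L^2$ with $\norm{h}_{L^2}=1$ and (forced by the indicator) supported in $\dot{D}_{k_4,j_4}$. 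Expanding the space--time convolution, the pairing becomes
\[
\int f_{k_1,j_1}(\xi_1,\tau_1)f_{k_2,j_2}(\xi_2,\tau_2)f_{k_3,j_3}(\xi_3,\tau_3)\,\overline{h(\xi_1+\xi_2+\xi_3,\tau_1+\tau_2+\tau_3)}\,d\xi_1d\xi_2d\xi_3d\tau_1d\tau_2d\tau_3.
\]

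Second, I would pass to the modulation variables. Since $f_{k_i,j_i}$ is supported in $\dot{D}_{k_i,j_i}$, the quantity $\mu_i=\tau_i-\omega(\xi_i)$ lies in $\widetilde{I}_{j_i}$; setting $g_{k_i,j_i}(\xi,\mu)=f_{k_i,j_i}(\xi,\mu+\omega(\xi))$ for $i=1,2,3$ and $g_{k_4,j_4}(\xi,\mu)=\overline{h(\xi,\mu+\omega(\xi))}$ yields $L^2$ functions supported in $I_{k_i}\times\widetilde{I}_{j_i}$ with the same norms; replacing each by its modulus only enlarges the integral while preserving norms, so I may take them nonnegative as in Lemma~\ref{l42}. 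Because
\[
(\tau_1+\tau_2+\tau_3)-\omega(\xi_1+\xi_2+\xi_3)=\mu_1+\mu_2+\mu_3+\Omega(\xi_1,\xi_2,\xi_3),
\]
the change of variables turns the pairing into exactly $J(g_{k_1,j_1},g_{k_2,j_2},g_{k_3,j_3},g_{k_4,j_4})$; this is precisely the computation opening the proof of Lemma~\ref{l42}(c), read backwards.

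Third, I would invoke Lemma~\ref{l42}. The symmetry relations \eqref{ss}---valid because $\omega$ is odd and $I_k,\widetilde{I}_j$ are symmetric about the origin---show that $|J|$ is invariant under every permutation of its four slots, since the transpositions $(1\,2),(2\,3),(3\,4)$ generate $S_4$ and the last one is exactly the reflect-and-swap appearing in \eqref{ss}. Hence I may relabel so that $k_1\le k_2\le k_3\le k_4$, apply the appropriate part of Lemma~\ref{l42}, and read off the bound in the symmetric data $k_{max},k_{sec},k_{thd},k_{min}$, $j_{max},\dots,j_{min}$. Taking the supremum over $\norm{g_{k_4,j_4}}_{L^2}=1$ converts $\prod_{i=1}^4\norm{\cdot}_{L^2}$ into $\prod_{i=1}^3\norm{f_{k_i,j_i}}_{L^2}$, so parts (a), (c), (d) are immediate. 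For part (b) I would note that, in the ordered labeling, $k_{thd}=k_2$ and $k_{sec}=k_3$, so the hypothesis $k_{thd}\le k_{sec}-5$ is the lemma's $k_2\le k_3-5$, while the corollary's dichotomy---whether or not some $(k_i,j_i)=(k_{thd},j_{max})$---matches the lemma's alternative $j_2=j_{max}$ versus $j_2\ne j_{max}$, producing the factors $2^{k_{thd}/2}$ and $2^{k_{min}/2}$ respectively.

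I anticipate no genuine analytic difficulty: the statement is a repackaging of Lemma~\ref{l42}. The only points demanding care are organizational, namely carrying the $\Omega$-shift correctly through the change of variables in the second step, and matching the symmetric extremal notation $k_{max},k_{sec},k_{thd},k_{min}$ of the corollary against the ordered hypotheses of the lemma---in particular verifying, through \eqref{ss}, that the distinguished output slot of $J$ may be interchanged with the three input slots without altering the dyadic parameters.
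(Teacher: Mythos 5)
Your proposal is correct and takes essentially the same route as the paper: the paper's proof likewise dualizes the $L^2$ norm against $f$ supported in $\dot{D}_{k_4,j_4}$, passes to the modulation variables via $f_{k_i,j_i}^\sharp(\xi,\mu)=f_{k_i,j_i}(\xi,\mu+\omega(\xi))$ so that the pairing becomes exactly $J(f_{k_1,j_1}^\sharp,f_{k_2,j_2}^\sharp,f_{k_3,j_3}^\sharp,f_{k_4,j_4}^\sharp)$, and then invokes Lemma \ref{l42}. The only difference is that you make explicit the relabeling step via the symmetries \eqref{ss} (adjacent transpositions generating the full permutation group, with the reflections harmless since $I_k$ and $\widetilde{I}_j$ are symmetric), which the paper leaves implicit in ``follows from Lemma \ref{l42}''; this is a clarification, not a departure.
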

\begin{proof}
Clearly, we have
\begin{eqnarray}
&&\norm{1_{\dot{D}_{k_4,j_4}}(\xi,\tau)(f_{k_1,j_1}*f_{k_2,j_2}*f_{k_3,j_3})(\xi,\tau)}_{L^2}\nonumber\\
&=&\sup_{\norm{f}_{L^2}=1}\aabs{\int_{\dot{D}_{k_4,j_4}} f\cdot
f_{k_1,j_1}*f_{k_2,j_2}*f_{k_3,j_3} d\xi d\tau}.
\end{eqnarray}
Let $f_{k_4,j_4}=1_{\dot{D}_{k_4,j_4}}\cdot f$, we have
$f_{k_i,j_i}^\sharp(\xi,\mu)=f_{k_i,j_i}(\xi,\mu+\omega(\xi))$,
$i=1,2,3,4$. The functions $f_{k_i,j_i}^\sharp$ are supported in
$I_{k_i}\times \bigcup_{|m|\leq 3}\widetilde{I}_{j_i+m}$,
$\norm{f_{k_i,j_i}^\sharp}_{L^2}=\norm{f_{k_i,j_i}}_{L^2}$. Using
simple changes of variables,  we get
\[\int_{\dot{D}_{k_4,j_4}} f\cdot
f_{k_1,j_1}*f_{k_2,j_2}*f_{k_3,j_3} d\xi d\tau =
J(f_{k_1,j_1}^\sharp,f_{k_2,j_2}^\sharp,f_{k_3,j_3}^\sharp,f_{k_4,j_4}^\sharp).\]
Then Corollary \ref{cor42} follows from Lemma \ref{l42}.
\end{proof}
\section{Proof of the trilinear estimate}
In this section, we use the following propositions to prove Lemma
\ref{l41}. For simplicity, we let
$$G=2^{k_{4}}\norm{\eta_{k_{4}}(\xi)(\tau-\omega(\xi)+i)^{-1} f_{k_{1}}*f_{k_{2}}* f_{k_{3}}}_{X_{k_{4}}}$$
\begin{proposition}\label{p51}
For $0\leq k_1\leq k_2\leq k_3-10, k_3\geq 110, |k_4-k_3|\leq 5$, we
have \\
$G\lesssim2^{k_{1}/2}\prod\limits_{i=1}^3\norm{f_{k_i}}_{X_{k_i}}$.
\begin{proof}
According to the definition of $X_{k}$
\begin{eqnarray*}
G&\leq& 2^{k_{4}}\sum_{j_{4}=0}^\infty2^{j_{4}/2}\norm{1_{\dot{D}_{k_4,j_4}}(\xi,\tau)(\tau-\omega(\xi)+i)^{-1}(f_{k_1,j_1}*f_{k_2,j_2}*f_{k_3,j_3})}_{L^2}\\
&\les&2^{k_{4}}\sum_{j_{4}=0}^\infty2^{j_{4}/2}2^{-j_{4}}\norm{1_{\dot{D}_{k_4,j_4}}(\xi,\tau)(f_{k_1,j_1}*f_{k_2,j_2}*f_{k_3,j_3})}_{L^2}\\
&\les&2^{k_{4}}\sum_{j_{4}=0}^\infty2^{-j_{4}/2}2^{-3k_{max}/2}2^{(j_{1}+j_{2}+j_{3}+j_{4})/2}\prod_{i=1}^3\norm{f_(k_{i},j_{i})}_{L^{2}}\\
&\les&2^{k_{1}/2}\prod_{i=1}^3\norm{f_{k_i}}_{X_{k_i}}
\end{eqnarray*}
Here we use the Corollary \ref{cor42} (d) and
$(\tau-\omega(\xi)+i)^{-1}\sim2^{-j_4}$.
\end{proof}
\end{proposition}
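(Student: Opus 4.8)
The plan is to reduce the dyadic quantity $G$ to the $L^2$-convolution bound of Corollary~\ref{cor42}(d); the whole difficulty is concentrated in the summation over the output modulation. First I would unfold the $X_{k_4}$-norm and exploit the dissipative resolvent. By definition of $X_{k_4}$,
\[
G=2^{k_4}\sum_{j_4\ge 0}2^{j_4/2}\norm{\eta_{j_4}(\tau-\omega(\xi))\,\eta_{k_4}(\xi)\,(\tau-\omega(\xi)+i)^{-1}\,f_{k_1}*f_{k_2}*f_{k_3}}_{L^2}.
\]
On the support of $\eta_{j_4}(\tau-\omega(\xi))$ one has $|\tau-\omega(\xi)|\sim 2^{j_4}$, so $|(\tau-\omega(\xi)+i)^{-1}|\les 2^{-j_4}$, which turns the weight $2^{j_4/2}$ into $2^{-j_4/2}$ and leaves
\[
G\les 2^{k_4}\sum_{j_4\ge 0}2^{-j_4/2}\norm{1_{\dot{D}_{k_4,j_4}}\,(f_{k_1}*f_{k_2}*f_{k_3})}_{L^2}.
\]

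Next I would decompose each input in modulation, $f_{k_i}=\sum_{j_i}f_{k_i,j_i}$ with $f_{k_i,j_i}$ supported in $\dot{D}_{k_i,j_i}$, and apply Corollary~\ref{cor42}(d). Its frequency hypothesis holds: since $k_1\le k_3-10$ and $|k_4-k_3|\le 5$, the minimum of $k_1,k_2,k_3,k_4$ is $k_1$ and the maximum $k_{max}=\max(k_3,k_4)$ satisfies $k_1\le k_3-10\le k_{max}-10$. This gives
\[
\norm{1_{\dot{D}_{k_4,j_4}}\,(f_{k_1,j_1}*f_{k_2,j_2}*f_{k_3,j_3})}_{L^2}\les 2^{(j_1+j_2+j_3+j_4)/2}\,2^{-3k_{max}/2}\prod_{i=1}^3\norm{f_{k_i,j_i}}_{L^2}.
\]
The frequency factor is already in good shape: as $k_4\le k_{max}$, one has $2^{k_4}2^{-3k_{max}/2}\les 2^{-k_{max}/2}\le 2^{k_1/2}$, so the target power $2^{k_1/2}$ is reached (with room to spare, since $k_{max}\sim k_3\ges k_1$).

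The main obstacle is the summation over the output modulation $j_4$: after inserting the last estimate, the resolvent gain $2^{-j_4/2}$ is exactly annihilated by the $2^{j_4/2}$ produced by Corollary~\ref{cor42}(d), so the bare series in $j_4$ does not converge and must be tamed using the support of the convolution. Since $\tau-\omega(\xi)=\sum_{i=1}^3(\tau_i-\omega(\xi_i))+\Omega(\xi_1,\xi_2,\xi_3)$ with $|\Omega|=3|\xi_1+\xi_2|\,|\xi_1+\xi_3|\,|\xi_2+\xi_3|\les 2^{k_2+2k_3}$, the triple convolution is supported where $2^{j_4}\les\max(2^{j_1},2^{j_2},2^{j_3},2^{k_2+2k_3})$. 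I would split the $j_4$-sum at $\max(j_1,j_2,j_3)$: in the range $j_4\le\max(j_1,j_2,j_3)$ the geometric series is truncated and controlled by $2^{\max(j_1,j_2,j_3)/2}\le 2^{(j_1+j_2+j_3)/2}$, while in the resonant range $j_4>\max(j_1,j_2,j_3)$ one must have $2^{j_4}\les 2^{k_2+2k_3}$, so only $O(k_3)$ equal terms occur, a polynomial loss comfortably absorbed by the exponential gain $2^{-k_{max}/2}$ (recall $k_3\ge 110$). After this, the remaining sums over $j_1,j_2,j_3$ rebuild $\prod_{i=1}^3\big(\sum_{j_i}2^{j_i/2}\norm{f_{k_i,j_i}}_{L^2}\big)=\prod_{i=1}^3\norm{f_{k_i}}_{X_{k_i}}$, and combining with the frequency factor yields $G\les 2^{k_1/2}\prod_{i=1}^3\norm{f_{k_i}}_{X_{k_i}}$, as asserted.
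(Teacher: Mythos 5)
Your overall route is exactly the paper's: unfold the $X_{k_4}$-norm, use $|(\tau-\omega(\xi)+i)^{-1}|\sim 2^{-j_4}$ on $\dot{D}_{k_4,j_4}$, apply Corollary~\ref{cor42}(d), and close with the frequency bookkeeping $2^{k_4}2^{-3k_{max}/2}\les 2^{-k_{max}/2}\le 2^{k_1/2}$. You have also put your finger on the step the paper's two-line proof silently elides: after inserting (d), the $2^{-j_4/2}$ resolvent gain is exactly cancelled by the $2^{j_4/2}$ from the corollary, so the $j_4$-summand is \emph{constant} in $j_4$ and the bare series diverges; it must be truncated by the support relation $\tau-\omega(\xi)=\sum_{i}(\tau_i-\omega(\xi_i))+\Omega$ with $|\Omega|\les 2^{k_2+2k_3}$. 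Your treatment of the resonant range $j_4>\max(j_1,j_2,j_3)$ is correct: there are only $O(k_3)$ flat terms, and this polynomial loss in $k$ is absorbed by $2^{-k_{max}/2}$ since $k_3\ge 110$.

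The gap is in the complementary range $j_4\le\max(j_1,j_2,j_3)$. There is no geometric series there: since the summand is flat in $j_4$, the truncated sum has $1+\max(j_1,j_2,j_3)$ equal terms and produces the factor $(1+\max(j_1,j_2,j_3))\,2^{(j_1+j_2+j_3)/2}$, not the claimed $2^{\max(j_1,j_2,j_3)/2}\le 2^{(j_1+j_2+j_3)/2}$. This loss is polynomial in $j$, not in $k$: it cannot be absorbed by the exponential gain $2^{-k_{max}/2}$ (the $j_i$ are unbounded for fixed $k_i$), and it ruins the final resummation, because $\norm{f_k}_{X_k}=\sum_j 2^{j/2}\norm{\eta_j(\tau-\xi^3)f_k}_{L^2}$ is an $\ell^1$-type sum with no room for an extra factor of $j$. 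The repair stays inside the paper's toolkit: in this range invoke Corollary~\ref{cor42}(b) instead of (d); its hypothesis $k_{thd}\le k_{sec}-5$ holds here since $k_{thd}=k_2\le k_3-10$ while $k_{sec}\ge k_3-5$. The extra factor $2^{-j_{max}/2}$ in (b) either makes the $j_4$-series genuinely geometric (when $j_4=j_{max}$) or supplies $(1+j_{max})2^{-j_{max}/2}\les 1$ against the modulation weight of the input carrying $j_{max}$ (when $j_{max}\in\{j_1,j_2,j_3\}$), and the frequency factor becomes $2^{k_4}2^{-k_{max}}2^{k_{min}/2}\sim 2^{k_1/2}$, which is the asserted bound; in the exceptional subcase of (b), where one only gets $2^{k_{thd}/2}=2^{k_2/2}$, one can interpolate with (d), using the slack $2^{-k_{max}/2}$, to recover $2^{k_1/2}$. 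To be fair, the paper's own displayed chain jumps from the divergent flat $j_4$-sum straight to the conclusion, so your proposal is more honest about the difficulty than the original proof --- but as written it does not close it.
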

\begin{proposition}\label{p52}
For $0\leq k_1\leq k_2\leq k_3, k_{2}\geq k_{3}-10, k_3\geq 110,
|k_4-k_3|\leq5,k_1\leq k_2-10$,\\ we  have
$G\lesssim2^{k_{1}/2}\prod\limits_{i=1}^3\norm{f_{k_i}}_{X_{k_i}}$.
\begin{proof}
The proof is similar to Proposition \ref{p51},  we omit it.
\end{proof}
\end{proposition}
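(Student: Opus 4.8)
The plan is to run the same three-step computation used for Proposition \ref{p51}, the only genuine change being the frequency bookkeeping. First I would unfold the definition of the $X_{k_4}$-norm, bounding
$$G\le 2^{k_4}\sum_{j_4\ge 0}2^{j_4/2}\norm{1_{\dot{D}_{k_4,j_4}}(\xi,\tau)\,(\tau-\omega(\xi)+i)^{-1}(f_{k_1}*f_{k_2}*f_{k_3})}_{L^2},$$
and then use that on $\dot{D}_{k_4,j_4}$ one has $|(\tau-\omega(\xi)+i)^{-1}|\les 2^{-j_4}$, so the prefactor collapses to $2^{j_4/2}2^{-j_4}=2^{-j_4/2}$. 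Decomposing each input dyadically in the modulation as $f_{k_i}=\sum_{j_i}f_{k_i,j_i}$ with $f_{k_i,j_i}$ supported in $\dot{D}_{k_i,j_i}$, this reduces matters to summing the norms $\norm{1_{\dot{D}_{k_4,j_4}}(f_{k_1,j_1}*f_{k_2,j_2}*f_{k_3,j_3})}_{L^2}$ against the weight $2^{-j_4/2}$, which is precisely the quantity controlled by Corollary \ref{cor42}.

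The decisive point, and the only place where the hypotheses of Proposition \ref{p52} enter differently from those of Proposition \ref{p51}, is checking that Corollary \ref{cor42}(d) still applies. The constraints $|k_4-k_3|\le 5$ and $k_2\ge k_3-10$ force $k_2\sim k_3\sim k_4$, so that $\{k_2,k_3,k_4\}$ furnish $k_{max},k_{sec},k_{thd}$ (all $\sim k_3$) while $k_{min}=k_1$. The remaining hypothesis $k_1\le k_2-10$ then gives $k_{min}=k_1\le k_2-10\le k_{max}-10$, which is exactly the frequency separation $k_{min}\le k_{max}-10$ demanded by part (d). In Proposition \ref{p51} the same separation arose instead from the gap $k_2\le k_3-10$; part (d) only sees that the smallest index lies at least $10$ below the largest, so it applies verbatim in both regimes. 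Invoking Corollary \ref{cor42}(d) supplies the gain $2^{-3k_{max}/2}$, which against the prefactor $2^{k_4}$ yields $2^{k_4}2^{-3k_{max}/2}\sim 2^{-k_3/2}$.

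Performing the modulation summation exactly as in Proposition \ref{p51} --- the weight $2^{-j_4/2}$ paired with the $2^{(j_1+j_2+j_3+j_4)/2}$ from part (d), the factors $2^{j_i/2}$ reassembling $\norm{f_{k_i}}_{X_{k_i}}$ for $i=1,2,3$ --- produces $G\les 2^{-k_3/2}\prod_{i=1}^3\norm{f_{k_i}}_{X_{k_i}}$. Since $k_1,k_3\ge 0$ we have $2^{-k_3/2}\le 2^{k_1/2}$, whence $G\les 2^{k_1/2}\prod_{i=1}^3\norm{f_{k_i}}_{X_{k_i}}$, as required. I expect the one step needing care to be the convergence of the $j_4$-sum: the clean statement of Corollary \ref{cor42}(d) carries no explicit $2^{-j_{max}/2}$ decay, so to close the sum one leans, as in Proposition \ref{p51}, on the sharper per-case bounds inside the proof of Lemma \ref{l42}(d). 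In the present configuration this is in fact more comfortable than in Proposition \ref{p51}, because the output constraint $|k_4-k_3|\le 5$ forces $|\xi_2+\xi_3|\sim 2^{k_3}$ and hence $|\Omega|\sim 2^{3k_3}$, pinning the high modulation and rendering the summation convergent without the auxiliary dyadic decomposition of $|\xi_1+\xi_2|$ needed there. This bookkeeping, rather than any new inequality, is the crux.
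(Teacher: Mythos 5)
Your proposal is correct and follows essentially the same route the paper intends: the paper's proof of Proposition \ref{p52} is simply ``similar to Proposition \ref{p51}'', i.e.\ unfold the $X_{k_4}$-norm, use $|(\tau-\omega(\xi)+i)^{-1}|\les 2^{-j_4}$, and apply Corollary \ref{cor42}(d), whose hypothesis $k_{min}\leq k_{max}-10$ you correctly verify now comes from $k_1\leq k_2-10$ rather than from $k_2\leq k_3-10$. Your closing remark about the $j_4$-summation is also sound, and in fact addresses a point the paper glosses over: here $|\xi_1+\xi_2|\sim 2^{k_2}$ and $|\xi_2+\xi_3|\sim 2^{k_4}$ are pinned, so $|\Omega|\sim 2^{k_2+2k_3}$ forces $j_{max}\gtrsim 3k_3$ and the sums close without the dyadic decomposition in $|\xi_1+\xi_2|$ used in Lemma \ref{l42}(d).
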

\begin{proposition}\label{p53}
For $0\leq k_1\leq k_2\leq k_3, k_1\geq k_3-30, k_3\geq 110,
|k_4-k_3|\leq 5$,\\ we  have
$G\lesssim2^{k_{1}/2}\prod\limits_{i=1}^{3}
\norm{f_{k_i}}_{X_{k_i}}$.
\begin{proof}
According to Corollary \ref{cor42} (c), we have
\begin{eqnarray*}
&&\norm{1_{\dot{D}_{k_4,j_4}}(\xi,\tau)(f_{k_1,j_1}*f_{k_2,j_2}*f_{k_3,j_3})}_{L^2}\\
&&\lesssim
2^{(j_{1}+j_{2}+j_3+j_4)/2}2^{-j_{max}/2}2^{-(k_{min}+k_{thd}+k_{sec})/6}\prod_{i=1}^3\norm{f_{k_i,j_i}}_{L^2}.
\end{eqnarray*}
So we get
\begin{eqnarray*}
G&\lesssim&
2^{k_{4}}\sum_{j_{4}=0}^{\infty}2^{-j_{4}/2}2^{(j_{1}+j_{2}+j_3+j_4)/2}2^{-j_{max}/2}2^{-(k_{min}+k_{thd}+k_{sec})/6}\prod_{i=1}^3\norm{f_{k_i,j_i}}_{L^{2}}\\
&\lesssim&
2^{(k_{1}+k_{2})/4}\prod_{i=1}^3\norm{f_{k_i}}_{X_{k_i}}
\end{eqnarray*}
In the last inequality, we use the fact that $k_{min}\geq
k_{max}-40$ and $\abs{k_{4}-k_{3}}\leq5$.
\end{proof}
\end{proposition}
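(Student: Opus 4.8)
The plan is to bound $G$ directly from the definition of the $X_{k_4}$ norm and reduce everything to the convolution estimate of Corollary \ref{cor42}, exactly as in the proof of Proposition \ref{p51} but with part (c) in place of part (d). First I would write
\[
G = 2^{k_4}\sum_{j_4\geq 0} 2^{j_4/2}\norm{\eta_{j_4}(\tau-\omega(\xi))\eta_{k_4}(\xi)(\tau-\omega(\xi)+i)^{-1}f_{k_1}*f_{k_2}*f_{k_3}}_{L^2},
\]
and use that on $\dot{D}_{k_4,j_4}$ one has $|(\tau-\omega(\xi)+i)^{-1}|\les 2^{-j_4}$, so the resolvent contributes a factor $2^{-j_4}$ and the prefactor becomes $2^{j_4/2}2^{-j_4}=2^{-j_4/2}$. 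Then I would decompose each input into modulation pieces, $f_{k_i}=\sum_{j_i\geq 0}f_{k_i,j_i}$ with $f_{k_i,j_i}=\eta_{j_i}(\tau-\omega(\xi))f_{k_i}$, reducing the problem to estimating $\norm{1_{\dot{D}_{k_4,j_4}}(f_{k_1,j_1}*f_{k_2,j_2}*f_{k_3,j_3})}_{L^2}$ summed against $2^{-j_4/2}$ over all four modulation indices.

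Because the hypothesis $k_1\geq k_3-30$ forces $k_1,k_2,k_3$ (and, since $|k_4-k_3|\leq 5$, also $k_4$) to be comparable, the separation hypotheses of parts (b) and (d) are unavailable: this is the genuinely resonant regime, with no transversality gain from frequency differences. The point is that all $k_i$ lie in $\N$ (indeed $k_1\geq k_3-30\geq 80$), so part (c) applies and gives
\[
\norm{1_{\dot{D}_{k_4,j_4}}(f_{k_1,j_1}*f_{k_2,j_2}*f_{k_3,j_3})}_{L^2}\les 2^{(j_1+j_2+j_3+j_4)/2}2^{-j_{max}/2}2^{-(k_{min}+k_{thd}+k_{sec})/6}\prod_{i=1}^3\norm{f_{k_i,j_i}}_{L^2}.
\]
Substituting this, the modulation powers combine as $2^{-j_4/2}\cdot 2^{(j_1+j_2+j_3+j_4)/2}=2^{(j_1+j_2+j_3)/2}$, leaving the single gain $2^{-j_{max}/2}$ together with the frequency factor $2^{k_4}2^{-(k_{min}+k_{thd}+k_{sec})/6}$.

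The summation is the crux, and I expect it to be the only delicate point. Since $j_{max}=\max(j_1,j_2,j_3,j_4)\geq j_4$, one has $2^{-j_{max}/2}\leq 2^{-j_4/2}$, so the sum over $j_4$ converges: $\sum_{j_4\geq 0}2^{-j_{max}/2}\leq \sum_{j_4\geq 0}2^{-j_4/2}\les 1$, uniformly in $j_1,j_2,j_3$. The remaining sums then factor,
\[
\sum_{j_1,j_2,j_3\geq 0}2^{(j_1+j_2+j_3)/2}\prod_{i=1}^3\norm{f_{k_i,j_i}}_{L^2}=\prod_{i=1}^3\norm{f_{k_i}}_{X_{k_i}},
\]
by the definition of the $X_{k_i}$ norm, yielding $G\les 2^{k_4}2^{-(k_{min}+k_{thd}+k_{sec})/6}\prod_{i=1}^3\norm{f_{k_i}}_{X_{k_i}}$.

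It remains to convert the frequency factor into the claimed bound. Using $k_1\geq k_3-30$ and $|k_4-k_3|\leq 5$ gives $k_{min}\geq k_{max}-40$, so all four indices lie within an absolute constant of $k_3$; hence $k_{min}+k_{thd}+k_{sec}\geq 3k_3-C$ and $2^{k_4}2^{-(k_{min}+k_{thd}+k_{sec})/6}\les 2^{k_3}2^{-k_3/2}=2^{k_3/2}\sim 2^{(k_1+k_2)/4}\sim 2^{k_1/2}$. This gives $G\les 2^{k_1/2}\prod_{i=1}^3\norm{f_{k_i}}_{X_{k_i}}$. The main obstacle throughout is simply that, unlike the separated cases, there is no frequency gain to spare, so the whole argument rests on the $2^{-j_{max}/2}$ factor supplied by the Strichartz-based estimate (c) being exactly enough to sum the otherwise-divergent $j_4$ series while the cubic frequency weight $2^{-(k_{min}+k_{thd}+k_{sec})/6}$ absorbs the derivative $2^{k_4}$ down to $2^{k_1/2}$.
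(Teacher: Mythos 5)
Your proposal is correct and follows essentially the same route as the paper: both apply Corollary \ref{cor42}(c) in this fully resonant regime, absorb the resolvent factor $2^{-j_4}$ against the weight $2^{j_4/2}$, sum the modulations using the $2^{-j_{max}/2}\leq 2^{-j_4/2}$ gain, and use $k_{min}\geq k_{max}-40$ with $|k_4-k_3|\leq 5$ to reduce $2^{k_4}2^{-(k_{min}+k_{thd}+k_{sec})/6}$ to $2^{(k_1+k_2)/4}\sim 2^{k_1/2}$. Your writeup merely makes explicit two points the paper leaves implicit (that $k_1\geq k_3-30\geq 80$ puts all four indices in $\N$ so part (c) applies, and how the $j$-sums factor into the $X_{k_i}$ norms), which is consistent with, not different from, the paper's argument.
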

\begin{proposition}\label{p54}
For  $ 0\leq k_1\leq k_2\leq k_3$, $k_4\leq k_3-10, k_3\geq 110$,
 $|k_2-k_3|\leq 5 $,\\ when $k_{1}\leq k_{2}-6$, we have
$G\lesssim2^{k_{1}/2}\prod\limits_{i=1}^3\norm{f_{k_i}}_{X_{k_i}}$
;\\ when $|k_1-k_2|\leq5$, we have
$G\lesssim2^{(k_{1}+k_{2})/4}\prod\limits_{i=1}^3\norm{f_{k_i}}_{X_{k_i}}$.
\begin{proof}
Firstly, we consider  the case $k_{1}\leq k_{2}-10$. Using the
method in prove lemma \ref{l42} and  making variables change
$\xi'_{1}=\xi_{1}, \xi'_{2}=\xi_{1}+\xi_{2}, \xi'_{3}=\xi_{3} $, we
can easily get
\begin{eqnarray*}
&&\norm{1_{\dot{D}_{k_4,j_4}}(\xi,\tau)(f_{k_1,j_1}*f_{k_2,j_2}*f_{k_3,j_3})}_{L^2}\\
&&\lesssim
2^{-k_{max}}2^{k_{1}/2}2^{(j_{1}+j_{2}+j_{3}+j_{4})/2}2^{-j_{max}/2}\prod_{i=1}^3\norm{f_{k_i}}_{L^{2}}
\end{eqnarray*}
By this result, we  get
\begin{eqnarray*}
G&\lesssim&2^{k_{4}}\sum_{j_{4}=0}^{\infty}2^{-j_{4}/2}2^{-k_{max}}2^{k_{1}/2}2^{(j_{1}+j_{2}+j_{3}+j_{4})/2}
2^{-j_{max}/2}\prod_{i=1}^3\norm{f_{k_i}}_{L^{2}}\\
&\lesssim&2^{k_{1}/2}\prod_{i=1}^3\norm{f_{k_i}}_{X_{k_i}}
\end{eqnarray*}
Secondly, we consider the case ${|k_{1}-k_{2}|\leq5}$. Observing
in this case, we have $k_{min}\geq k_{max}-30$, the result we need
follows by Corollary \ref{cor42} (c).
\end{proof}
\end{proposition}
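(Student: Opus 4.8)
The plan is to run the same modulation-decomposition scheme used in Propositions \ref{p51}--\ref{p53} and to reduce everything to the convolution estimates of Corollary \ref{cor42}, reading off from the frequency configuration which part of that corollary applies. Writing $f_{k_i}=\sum_{j_i}f_{k_i,j_i}$ with $f_{k_i,j_i}$ supported in $\dot{D}_{k_i,j_i}$ and unfolding the $X_{k_4}$-norm over $j_4$, I would use the resolvent bound $(\tau-\omega(\xi)+i)^{-1}\sim 2^{-j_4}$ on $\dot{D}_{k_4,j_4}$ to obtain
\begin{equation*}
G\lesssim 2^{k_4}\sum_{j_1,j_2,j_3,j_4}2^{-j_4/2}\norm{1_{\dot{D}_{k_4,j_4}}(f_{k_1,j_1}*f_{k_2,j_2}*f_{k_3,j_3})}_{L^2}.
\end{equation*}
Everything then hinges on the right $L^2$ convolution bound, and the two cases of the statement correspond precisely to the two frequency regimes of Corollary \ref{cor42}.

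In the first case $k_1\leq k_2-6$, the hypotheses $|k_2-k_3|\leq 5$ and $k_4\leq k_3-10$ force $k_{max}\sim k_{sec}\sim k_3$, while the remaining two frequencies satisfy $\max(k_1,k_4)\leq k_2-5=k_{sec}-5$. Hence we are in the regime $k_{thd}\leq k_{sec}-5$ and Corollary \ref{cor42}(b) applies, giving in either subcase the bound $\lesssim 2^{-k_{max}}2^{k_1/2}2^{(j_1+j_2+j_3+j_4)/2}2^{-j_{max}/2}\prod_{i=1}^3\norm{f_{k_i,j_i}}_{L^2}$ (one may instead re-derive this directly, as in Lemma \ref{l42}, via the change of variables $\xi_1'=\xi_1,\ \xi_2'=\xi_1+\xi_2,\ \xi_3'=\xi_3$, exploiting that $|\partial_{\xi_2'}\Omega|\sim 2^{2k_3}$ supplies the gain $2^{-k_{max}}$ while the small-frequency integration contributes $2^{k_1/2}$). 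Since $2^{k_4}2^{-k_{max}}=2^{k_4-k_3}\leq 2^{-10}\lesssim 1$, substituting this estimate and carrying out the modulation sum yields the claim. The $j_4$-summation leaves a factor $\sum_{j_4}2^{-j_{max}/2}\lesssim (J+2)2^{-J/2}$, where $J$ is the largest of $j_1,j_2,j_3$, and this polynomial loss is absorbed by the $2^{j/2}$-weight in the definition of the $X_k$-norm (indeed $J+2\lesssim 2^{J/2}$); the series is thus controlled by $\prod_{i=1}^3\norm{f_{k_i}}_{X_{k_i}}$, so that $G\lesssim 2^{k_1/2}\prod_{i=1}^3\norm{f_{k_i}}_{X_{k_i}}$.

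In the second case $|k_1-k_2|\leq 5$ all three inputs are comparable, $k_1\sim k_2\sim k_3$, and since $k_1,k_2,k_3,k_4\in\N$ I would invoke Corollary \ref{cor42}(c). Here $k_4\leq k_3-10\leq k_1$, so $k_{min}=k_4$, $k_{thd}=k_1$, $k_{sec}=k_2$ and the convolution gain is $2^{-(k_4+k_1+k_2)/6}$. After the identical modulation summation one gets $G\lesssim 2^{k_4}2^{-(k_4+k_1+k_2)/6}\prod_{i=1}^3\norm{f_{k_i}}_{X_{k_i}}$, and the prefactor is harmless because $k_4\leq k_1\leq (k_1+k_2)/2$ gives $k_4-\tfrac{k_4+k_1+k_2}{6}\leq \tfrac{k_1+k_2}{4}$, whence $G\lesssim 2^{(k_1+k_2)/4}\prod_{i=1}^3\norm{f_{k_i}}_{X_{k_i}}$. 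The main obstacle is really the first case: the whole estimate stands or falls on producing the factor $2^{-k_{max}}$, which is a transversality (non-degenerate resonance) gain rather than a dimensional one, and once that convolution bound is secured the remaining manipulations are routine.
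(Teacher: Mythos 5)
Your case 2 is sound and in fact tighter than the paper's own treatment: the paper justifies it by asserting $k_{min}\geq k_{max}-30$, which is false as stated (here $k_{min}=k_4$, which may be $0$; the inequality only holds among $k_1,k_2,k_3$), whereas you correctly identify $k_{min}=k_4$ in Corollary \ref{cor42}(c), keep the gain $2^{-(k_4+k_1+k_2)/6}$, and verify the prefactor arithmetic $k_4-\tfrac{k_4+k_1+k_2}{6}\leq\tfrac{k_1+k_2}{4}$ from $k_4\leq k_1$. Your case split $k_1\leq k_2-6$ also matches the statement, while the paper's proof only treats $k_1\leq k_2-10$ and $|k_1-k_2|\leq 5$, formally leaving $k_2-9\leq k_1\leq k_2-6$ uncovered.

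Case 1, however, contains a genuine gap: your claim that Corollary \ref{cor42}(b) yields $2^{-k_{max}}2^{k_1/2}$ ``in either subcase'' misreads the corollary. In the subcase where the function carrying $k_{thd}$ also carries $j_{max}$, the bound is $2^{-k_{max}}2^{k_{thd}/2}$ (the printed $2^{-k_{thd/2}}$ is a typo for $2^{k_{thd}/2}$, cf.\ \eqref{eq:l41bb}), and here $k_{thd}=\max(k_1,k_4)$, which equals $k_4$ whenever $k_4>k_1$. In that regime your scheme produces $2^{k_4}2^{-k_3}2^{k_4/2}=2^{3k_4/2-k_3}$, which is not $\lesssim 2^{k_1/2}$ when $k_4$ is near $k_3-10$ and $k_1$ is small. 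This is not merely a citation error, because the uniform $2^{k_1/2}$ convolution bound is actually false in this subcase: taking $k_1=0$, $k_2=k_3=k$, $k_4\sim m\leq k-10$, with $f_{k_2},f_{k_3}$ indicator functions on $\xi_2\in[2^k,2^k+2^m]$, $\xi_3\in-[2^k+2^m,2^k+2^{m+1}]$ with $O(1)$ modulations, the output on the shell $j_4\sim 2k+m$ has $L^2$ size $\sim 2^{3m/2-k}$, while $2^{-k_{max}}2^{k_1/2}2^{(j_1+j_2+j_3+j_4)/2}2^{-j_{max}/2}\prod_i\norm{f_{k_i,j_i}}_{L^2}\sim 2^{m-k}$ --- a loss of $2^{m/2}$, exactly saturating the $2^{k_{thd}/2}$ version. (The same objection hits your parenthetical direct derivation, and indeed the paper's own one-line change of variables: the relevant Jacobian is $|\omega'(\xi_3)-\omega'(\xi_2)|=3|\xi_3-\xi_2||\xi_3+\xi_2|\sim 2^{k_3}|\xi_1+\xi_4|$, which degenerates because $\xi_2+\xi_3=-(\xi_1+\xi_4)$ is small in this frequency configuration, so the asserted $2^{-k_{max}}2^{k_1/2}$ is not ``easily'' obtained there either.) The missing ingredient that rescues the proposition is the resonance lower bound: when $k_1\leq k_4-3$ one has $|\Omega|=3|\xi_1+\xi_2||\xi_1+\xi_3||\xi_2+\xi_3|\sim 2^{2k_3+k_4}$, so the bad subcase forces $j_4=j_{max}\geq 2k_3+k_4-C$; the $j_4$-sum then gains $\sum_{j_4\geq 2k_3+k_4-C}2^{-j_4/2}\lesssim 2^{-k_3-k_4/2}$, and $2^{k_4}\cdot 2^{-k_3}2^{k_4/2}\cdot 2^{-k_3-k_4/2}=2^{k_4-2k_3}\lesssim 2^{k_1/2}$, while for $|k_1-k_4|\leq 2$ one simply has $2^{k_{thd}/2}\sim 2^{k_1/2}$. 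You need to add this step to make case 1 close.
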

\begin{proposition}\label{p55}
For $0\leq k_1\leq k_2\leq k_3, \max(k_3,k_4)\leq 120$,   we have \\
$G\lesssim2^{(k_{1}+k_{2})/4}\prod\limits_{i=1}^3\norm{f_{k_i}}_{X_{k_i}}$
\begin{proof}
Using Corollary \ref{cor42} (a), noticing that $k_{i}\leq 120,
i=1,2,3,4$, we have
\begin{eqnarray*}
G&\lesssim&
2^{(k_{min}+k_{thd})/2}\prod_{i=1}^3\norm{f_{k_i}}_{X_{k_i}}\\
&\lesssim& 2^{(k_{1}+k_{2})/4}\prod_{i=1}^3\norm{f_{k_i}}_{X_{k_i}}
\end{eqnarray*}
\end{proof}
\end{proposition}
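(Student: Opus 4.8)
The plan is to run the same three-step scheme as in Proposition~\ref{p51}: unfold the $X_{k_4}$ norm, bound the elementary $L^2$ norm of the convolution by Corollary~\ref{cor42}, and then sum the resulting dyadic pieces over the modulation indices. The only structural differences are that here I invoke part (a) of Corollary~\ref{cor42} rather than part (d), and that the hypothesis $\max(k_3,k_4)\le 120$ (together with $k_1\le k_2\le k_3$, so that in fact every $k_i\le 120$) makes all spatial-frequency exponentials $O(1)$. This is exactly what lets me absorb the weight $2^{k_4}$ and the gain $2^{(k_{min}+k_{thd})/2}$ into an absolute constant at the very end.

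First I would write, by the definition of the $X_{k_4}$ norm and the fact that $|\tau-\omega(\xi)+i|^{-1}\sim 2^{-j_4}$ on the slab $\tau-\omega(\xi)\in\widetilde{I}_{j_4}$,
\begin{equation*}
G\les 2^{k_4}\sum_{j_4\ge 0}2^{-j_4/2}\norm{1_{\dot{D}_{k_4,j_4}}(\xi,\tau)\,(f_{k_1}*f_{k_2}*f_{k_3})}_{L^2},
\end{equation*}
and then decompose each input into its dyadic modulation pieces $f_{k_i}=\sum_{j_i}f_{k_i,j_i}$, so that the right-hand side becomes $2^{k_4}\sum_{j_1,j_2,j_3,j_4}2^{-j_4/2}\norm{1_{\dot{D}_{k_4,j_4}}(f_{k_1,j_1}*f_{k_2,j_2}*f_{k_3,j_3})}_{L^2}$. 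Applying Corollary~\ref{cor42}(a) bounds each summand by $C\,2^{(k_{min}+k_{thd})/2}2^{(j_{min}+j_{thd})/2}\prod_{i=1}^3\norm{f_{k_i,j_i}}_{L^2}$.

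The one computational step that needs care is the modulation summation. Here I would use the elementary inequality $j_{min}+j_{thd}\le j_1+j_2+j_3$ (the sum of the two smallest of four nonnegative numbers never exceeds the sum of any three of them), which holds uniformly in $j_4$; hence
\begin{equation*}
\sum_{j_4\ge 0}2^{-j_4/2}2^{(j_{min}+j_{thd})/2}\les 2^{(j_1+j_2+j_3)/2}\sum_{j_4\ge 0}2^{-j_4/2}\les 2^{(j_1+j_2+j_3)/2}.
\end{equation*}
Summing the remaining factor $2^{(j_1+j_2+j_3)/2}\prod_{i=1}^3\norm{f_{k_i,j_i}}_{L^2}$ over $j_1,j_2,j_3$ exactly reconstitutes $\prod_{i=1}^3\norm{f_{k_i}}_{X_{k_i}}$, leaving $G\les 2^{k_4}2^{(k_{min}+k_{thd})/2}\prod_{i=1}^3\norm{f_{k_i}}_{X_{k_i}}$.

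Finally, since every $k_i\le 120$, the prefactor $2^{k_4}2^{(k_{min}+k_{thd})/2}$ is bounded by an absolute constant, while $2^{(k_1+k_2)/4}\ge 1$; absorbing the former and inserting the latter yields the claimed bound $G\les 2^{(k_1+k_2)/4}\prod_{i=1}^3\norm{f_{k_i}}_{X_{k_i}}$. I expect no genuine obstacle here: the content of this case is precisely that in the bounded-frequency regime all dispersive and spatial gains are irrelevant, so the estimate collapses to Corollary~\ref{cor42}(a) together with the routine geometric summation in $j_4$; the only thing to get right is the bookkeeping inequality $j_{min}+j_{thd}\le j_1+j_2+j_3$ that makes the $j_4$-sum converge cleanly.
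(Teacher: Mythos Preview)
Your proposal is correct and follows the same approach as the paper: both invoke Corollary~\ref{cor42}(a) and then exploit $k_i\le 120$ to absorb all spatial-frequency factors into an absolute constant, after which the bound $2^{(k_1+k_2)/4}\ge 1$ is inserted for free. The paper's proof is simply terser and suppresses the routine modulation summation you spell out with the inequality $j_{min}+j_{thd}\le j_1+j_2+j_3$.
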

Now we turn to the proof of Lemma \ref{l41}.\\
\begin{bfseries}{Proof}:
\end{bfseries}
In view of definition, we get
$$\norm{\partial_x(uvw)}_{N^{s}}^{2}=\sum_{k_{4}=0}^{\infty}2^{2sk_{4}}\norm{\eta_{k_{4}}(\xi)(\tau-\omega(\xi)+i)^{-1}
\ft[\partial_{x}(uvw)]}_{X_{k_{4}}}^{2}$$  For the simplicity of
notation, setting $f_{k_1}=\eta_{k_1}(\xi)\ft(u)(\xi,\tau)$,
$f_{k_2}=\eta_{k_2}(\xi)\ft(v)(\xi,\tau)$,  and
$f_{k_3}=\eta_{k_3}(\xi)\ft(w)(\xi,\tau)$,  for $k_1, k_2, k_3\in
\Z_+$, then we get
\begin{eqnarray*}
&&\norm{\eta_{k_4}(\xi)(\tau-\omega(\xi)+i)^{-1}\ft
[\partial_{x}(uvw)]}_{X_{k_4}}\\
&&\les \sum_{k_1,k_2,k_3\in\Z_+}\norm{\xi\cdot\eta_{k_4}(\xi)(\tau-\omega(\xi)+i)^{-1}f_{k_1}*f_{k_2}*f_{k_3}}_{X_{k_4}}\\
&&\les \sum_{k_1,k_2,k_3\in
\Z_+}2^{k_4}\norm{\eta_{k_4}(\xi)(\tau-\omega(\xi)+i)^{-1}f_{k_1}*f_{k_2}*f_{k_3}}_{X_{k_4}}.
\end{eqnarray*}
From symmetry, it suffices to bound
\begin{eqnarray*}
\sum_{0\leq k_1\leq k_2\leq
k_3}2^{k_4}\norm{\eta_{k_4}(\xi)(\tau-\omega(\xi)+i)^{-1}f_{k_1}*f_{k_2}*f_{k_3}}_{X_{k_4}}.
\end{eqnarray*}
Dividing the summation into the several parts,  we get
\begin{eqnarray}\label{eq:trilinear}
&&\sum_{k_1\leq k_2\leq
k_3}2^{k_4}\norm{\eta_{k_4}(\xi)(\tau-\omega(\xi)+i)^{-1}f_{k_1}*f_{k_2}*f_{k_3}}_{X_{k_4}}\nonumber\\
&\leq& \sum_{j=1}^5\sum_{(k_1,k_2,k_3,k_4)\in A_j}
2^{k_4}\norm{\eta_{k_4}(\xi)(\tau-\omega(\xi)+i)^{-1}f_{k_1}*f_{k_2}*f_{k_3}}_{X_{k_4}},
\end{eqnarray}
where we denote
\begin{eqnarray*}
&&A_1=\{0\leq k_1\leq k_2\leq k_3-10, k_3\geq 110, |k_4-k_3|\leq 5 \};\\
&&A_2=\{0\leq k_1\leq k_2\leq k_3-10, k_3\geq 110, |k_4-k_3|\leq
5,
k_1\leq k_2-10 \};\\
&&A_3=\{0\leq k_1\leq k_2\leq k_3, k_1\geq k_3-30, k_3\geq 110,
|k_4-k_3|\leq 5 \};\\
&&A_4=\{0\leq k_1\leq k_2\leq k_3, k_4\leq k_3-10, k_3\geq 110,
|k_2-k_3|\leq 5\};\\
&&A_5=\{0\leq k_1\leq k_2\leq k_3, \max(k_3,k_4)\leq 120\}.
\end{eqnarray*}
We will apply Proposition \ref{p51}-\ref{p55} obtained in the
beginning of this section to bound the five terms in
\eqref{eq:trilinear}. For example,  for the first term, from
Proposition \ref{p51},  we have
\begin{eqnarray*}
&&\norm{2^{sk_4}\sum_{k_i\in
A_1}2^{k_4}\norm{\eta_{k_4}(\xi)(\tau-\omega(\xi)+i)^{-1}f_{k_1}*f_{k_2}*f_{k_3}}_{X_{k_4}}}_{l_{k_4}^2}\\
&\lesssim& \norm{2^{sk_4}\sum_{k_i\in
A_1}2^{(k_1)/2}\norm{f_{k_1}}_{X_{k_1}}\norm{f_{k_2}}_{X_{k_2}}\norm{f_{k_3}}_{X_{k_3}}}_{l_{k_4}^2}\\
&\lesssim& \norm{u}_{F^{1/4}}\norm{v}_{F^{1/4}}\norm{w}_{F^s}.
\end{eqnarray*}
For the other terms we can handle them in the same way. Therefore we
complete the proof of the Lemma \ref{l41}.

\section{Uniform LWP for MKdV-B equation}
In this section we study the uniform local well-posedness for the
MKdV-Burgers equation. We will prove a time localized version of
Theorem \ref{t12} where $T=T(\norm{\phi}_{H^{s}})$ is small. In
\cite{WL2}, the result of three estimate in  $X^{b,s}$ space is
depend on $\alpha, \varepsilon$ , so it is not proper in this
situation. To get a uniform result about $\alpha, \varepsilon$, we
will use the space $F^s$. Let us recall that \eqref{eq:MKdV-B} is
invariant in the following scaling
\begin{equation}\label{eq:scaling}
u(x,t)\rightarrow \lambda u(\lambda x, \lambda^3 t),\
\phi(x)\rightarrow \lambda\phi(\lambda x), \ \epsilon\rightarrow
\lambda^{4-2\alpha}\epsilon, \ \ \forall \ 0<\lambda\leq 1.
\end{equation}
This invariance is very important in the proof of Theorem \ref{t12}
and also crucial for the uniform global well-posedness in the next
section. We first show that $F^s(T)\hookrightarrow C([0, T], H^{s})$
for $s\in \R$, $T\in (0, 1]$ in the following proposition. We state
some results of Guo, Proposition \ref{p41}-\ref{p44}  can be found
in \cite{Guo}.
\begin{proposition}\label{p41}
If $s\in \R$, $T\in (0, 1]$ and  $u\in F^s(T)$, then
\begin{equation}
\sup_{t\in [0, T]}\norm{u(t)}_{H^s}\les \norm{u}_{F^s(T)}.
\end{equation}
\end{proposition}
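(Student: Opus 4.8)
The plan is to prove the stronger untruncated embedding for global representatives and then pass to the infimum defining $F^s(T)$.

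\textbf{Step 1: reduction to the untruncated space.} By the definition of $\norm{\cdot}_{F^s(T)}$ as an infimum over extensions, it suffices to show that every $w\in F^s$ satisfies $\sup_{t\in\R}\norm{w(t)}_{H^s}\les\norm{w}_{F^s}$. Indeed, if $w\in F^s$ agrees with $u$ on $[0,T]$, then $\sup_{t\in[0,T]}\norm{u(t)}_{H^s}=\sup_{t\in[0,T]}\norm{w(t)}_{H^s}\le\sup_{t\in\R}\norm{w(t)}_{H^s}\les\norm{w}_{F^s}$, and taking the infimum over all such $w$ gives the claim.

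\textbf{Step 2: Littlewood--Paley splitting.} By Plancherel in $x$ and the definition of $P_k$ we have $\norm{w(t)}_{H^s}^2\sim\sum_{k\in\Z_+}2^{2sk}\norm{P_kw(t)}_{L^2_x}^2$, whence $\sup_t\norm{w(t)}_{H^s}^2\le\sum_k 2^{2sk}\sup_t\norm{P_kw(t)}_{L^2_x}^2$, using that the supremum of a sum is bounded by the sum of the suprema. It therefore suffices to prove the uniform-in-time single-block estimate
\begin{equation}
\sup_{t\in\R}\norm{P_kw(t)}_{L^2_x}\les\norm{\eta_k(\xi)\ft(w)}_{X_k}.
\end{equation}

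\textbf{Step 3: the modulation decomposition.} Write $f_k=\eta_k(\xi)\ft(w)$ and decompose in the modulation variable, $f_{k,j}=\eta_j(\tau-\omega(\xi))\,f_k$, so $f_k=\sum_{j\ge0}f_{k,j}$ with $f_{k,j}$ supported where $\tau-\xi^3\in\widetilde{I}_j$. Recovering the spatial Fourier transform at time $t$ by inverting in $\tau$,
\begin{equation}
\ft_x\big(P_kw(t)\big)(\xi)=\int_{\R}f_k(\xi,\tau)e^{it\tau}\,d\tau=\sum_{j\ge0}\int_{\R}f_{k,j}(\xi,\tau)e^{it\tau}\,d\tau.
\end{equation}
For fixed $\xi$ the $\tau$-support of $f_{k,j}(\xi,\cdot)$ has measure $\les 2^j$, so Cauchy--Schwarz in $\tau$ gives, \emph{uniformly in $t$},
\begin{equation}
\Big|\int_{\R}f_{k,j}(\xi,\tau)e^{it\tau}\,d\tau\Big|\les 2^{j/2}\norm{f_{k,j}(\xi,\cdot)}_{L^2_\tau}.
\end{equation}
Taking $L^2_\xi$ norms, summing in $j$ and using the triangle inequality yields $\sup_t\norm{P_kw(t)}_{L^2_x}\les\sum_j 2^{j/2}\norm{f_{k,j}}_{L^2}=\norm{f_k}_{X_k}$, which is exactly the desired block estimate. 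Substituting this into Step 2 and recalling the definition of $\norm{\cdot}_{F^s}$ finishes the argument.

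\textbf{The main point and continuity.} The only genuine obstacle is interchanging $\sup_t$ with the modulation integral, and this is precisely where the $\ell^1$-type summation $\sum_j 2^{j/2}(\cdots)$ in the definition of $X_k$ is essential: the bound in Step 3 is independent of $t$, so the crude inequality ``sup of a sum $\le$ sum of sups'' loses nothing, whereas an $\ell^2$-in-$j$ norm would be too weak to control the pointwise-in-time Fourier integral. Finally, continuity of $t\mapsto P_kw(t)$ in $L^2_x$ follows from dominated convergence applied to the same integral, and the uniform convergence in $k$ of the dyadic sum (guaranteed by the block estimate together with $w\in F^s$) upgrades the supremum bound to $u\in C([0,T];H^s)$.
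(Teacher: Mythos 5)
Your proof is correct and is essentially the expected argument: the paper does not prove Proposition \ref{p41} itself but cites it from \cite{Guo}, where the proof is exactly your Step 3 --- Cauchy--Schwarz in $\tau$ on each modulation block $f_{k,j}=\eta_j(\tau-\xi^3)f_k$, giving the $t$-uniform bound $2^{j/2}\norm{f_{k,j}}_{L^2}$ and then using the $\ell^1$-in-$j$ structure of $X_k$ --- combined with your reduction to extensions via the infimum defining $F^s(T)$. Equivalently, the statement is the special case $Y=L_t^\infty H^s$ of the embedding machinery recorded as Proposition \ref{p21} (since $\norm{e^{it\tau_0}e^{-t\partial_x^3}f}_{L_t^\infty H^s}=\norm{f}_{H^s}$), so your writeup simply unpacks that mechanism directly; no gaps.
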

\begin{proposition}\label{p42}
If $s\in \R$ and $u\in L_t^2H_x^s$, then
\begin{equation}
\norm{u}_{N^s}\les \norm{u}_{L_t^2H_x^s}.
\end{equation}
\end{proposition}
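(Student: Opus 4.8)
The plan is to unwind the definition of the $N^s$ norm and exploit the decay coming from the multiplier $(i+\tau-\xi^3)^{-1}$. Recall that $\norm{u}_{N^s}^2=\sum_{k\in\Z_+}2^{2sk}\norm{(i+\tau-\xi^3)^{-1}\eta_k(\xi)\ft(u)}_{X_k}^2$, and that the $X_k$ norm is the weighted sum $\sum_{j\geq0}2^{j/2}\norm{\eta_j(\tau-\xi^3)\,\cdot\,}_{L^2}$ over the modulation variable. The whole point is that on the support of $\eta_j(\tau-\xi^3)$ one has $\jb{\tau-\xi^3}\sim 2^j$, so the multiplier obeys $|(i+\tau-\xi^3)^{-1}|\les 2^{-j}$ there.

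First I would fix $k$ and estimate the single factor $\norm{(i+\tau-\xi^3)^{-1}\eta_k(\xi)\ft(u)}_{X_k}$. Inserting the bound $|(i+\tau-\xi^3)^{-1}|\les 2^{-j}$ on each dyadic modulation piece converts the weight $2^{j/2}$ in the $X_k$ norm into $2^{-j/2}$, giving
\[
\norm{(i+\tau-\xi^3)^{-1}\eta_k(\xi)\ft(u)}_{X_k}\les \sum_{j\geq0}2^{-j/2}\norm{\eta_j(\tau-\xi^3)\eta_k(\xi)\ft(u)}_{L^2}.
\]
Next I would apply the Cauchy--Schwarz inequality in $j$: since $\sum_{j\geq0} 2^{-j}\les 1$ and the cutoffs $\eta_j(\tau-\xi^3)$ have bounded overlap (so that $\sum_j\norm{\eta_j(\tau-\xi^3)g}_{L^2}^2\les \norm{g}_{L^2}^2$), the right-hand side is controlled by $\norm{\eta_k(\xi)\ft(u)}_{L^2}$.

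It then remains only to sum over $k$. Substituting the per-$k$ bound gives $\norm{u}_{N^s}^2\les\sum_{k\in\Z_+}2^{2sk}\norm{\eta_k(\xi)\ft(u)}_{L^2}^2$, and since $\{\eta_k\}$ is a partition of unity with $\jb{\xi}\sim2^k$ on the support of $\eta_k$, Plancherel in both the space and time variables identifies this last quantity with $\norm{u}_{L_t^2H_x^s}^2$, closing the estimate.

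The argument is essentially bookkeeping, so I expect no genuine difficulty. The one step deserving care is the passage from the $\ell^1$-type $X_k$ summation back to the $L^2$ structure: this is precisely where the summability $\sum_j 2^{-j/2}\cdot 2^{-j/2}<\infty$ produced by the modulation gain must be combined via Cauchy--Schwarz with the almost-orthogonality (bounded overlap) of the modulation projections $\eta_j(\tau-\xi^3)$. Once that interchange is justified, everything else is routine Plancherel.
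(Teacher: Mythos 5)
Your argument is correct and matches the intended proof: the paper states this proposition without proof, citing \cite{Guo}, and the proof there is exactly this computation. The steps you identify --- the bound $|(i+\tau-\xi^3)^{-1}|=\jb{\tau-\xi^3}^{-1}\les 2^{-j}$ on the support of $\eta_j(\tau-\xi^3)$ (valid also at $j=0$), Cauchy--Schwarz in $j$ combined with the bounded overlap of the modulation cutoffs, and Plancherel in both variables together with $\jb{\xi}\sim 2^k$ on the support of $\eta_k$ --- are precisely what is needed, and they work for every $s\in\R$.
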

 We recall the
estimate in \cite{Guo} for the free solution.
\begin{proposition}\label{p43}
Let $s\in \R$. There exists $C>0$ such that for any $0\leq
\epsilon\leq 1$
\begin{equation}
\norm{\psi(t)W_\epsilon^\alpha(t)\phi}_{F^s}\leq C\norm{\phi}_{H^s},
\quad \forall \ \phi \in H^s(\R).
\end{equation}
\end{proposition}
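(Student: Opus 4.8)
The plan is to reduce this free-solution estimate to a single dyadic frequency block and then to a uniform-in-$\epsilon$ bound on the time Fourier transform of the cut-off multiplier. First I would record the structural observation that the linear evolution of \eqref{eq:MKdV-B} carries exactly the same dispersion $\xi^3$ and the same dissipation $\epsilon|\xi|^{2\alpha}$ as the KdV--Burgers equation, so the estimate is literally the free-solution bound of \cite{Guo}; what follows is how I would reprove it directly. With $\psi$ a fixed Schwartz time cut-off, taking the space-time Fourier transform gives
$$\ft(\psi(t)W_\epsilon^\alpha(t)\phi)(\xi,\tau)=\widehat{\phi}(\xi)\,K_\epsilon(\xi,\tau-\xi^3),\qquad K_\epsilon(\xi,\mu)=\int_\R \psi(t)e^{-\epsilon|\xi|^{2\alpha}|t|}e^{-i\mu t}\,dt,$$
so that the multiplier depends on $\tau$ only through the modulation variable $\mu=\tau-\xi^3$, and the $\xi$-dependence enters solely through $a:=\epsilon|\xi|^{2\alpha}\geq0$.

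By the definitions of $F^s$ and $X_k$, and since $\jb{\xi}^{2s}\sim2^{2sk}$ on $I_k$, it suffices to prove the single-block bound
$$\norm{\eta_k(\xi)\ft(\psi W_\epsilon^\alpha\phi)}_{X_k}=\sum_{j\geq0}2^{j/2}\norm{\eta_j(\mu)\eta_k(\xi)\widehat{\phi}(\xi)K_\epsilon(\xi,\mu)}_{L^2_{\xi,\mu}}\les\norm{\eta_k(\xi)\widehat{\phi}}_{L^2_\xi},$$
after which I square, weight by $2^{2sk}$ and sum in $k$ to recover the full statement. Because $\widehat{\phi}(\xi)$ is independent of $\mu$, estimating the inner $\mu$-integral by its supremum in $\xi$ and pulling $\widehat\phi$ out reduces everything to the scalar claim
$$\sum_{j\geq0}2^{j/2}\sup_{\xi}\norm{\eta_j(\mu)K_\epsilon(\xi,\cdot)}_{L^2_\mu}\les C\quad\text{uniformly in }0\leq\epsilon\leq1.$$

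To establish this I would use $K_\epsilon(\xi,\cdot)=\widehat{\psi}*\big(\tfrac{2a}{a^2+\mu^2}\big)$, the transform of the product $\psi(t)e^{-a|t|}$, and treat the model kernel $\tfrac{2a}{a^2+\mu^2}$ in two regimes. On $2^j\lesssim a$ it has size $\sim a^{-1}$, so $2^{j/2}\norm{\eta_j(\cdot)}_{L^2_\mu}\sim a^{-1}2^{j}$, which sums geometrically up to $j\sim\log_2 a$ to an $O(1)$ constant; on $2^j\gtrsim a$ it has size $\sim a\,2^{-2j}$, so $2^{j/2}\norm{\eta_j(\cdot)}_{L^2_\mu}\sim a\,2^{-j}$, which again sums to $O(1)$. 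The convolution against the Schwartz kernel $\widehat\psi$ only redistributes mass across nearby dyadic shells and is absorbed by its rapid decay, so the $O(1)$ bound persists and, crucially, is independent of $a$, hence of $\epsilon$ and $\xi$.

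The main obstacle is exactly this uniformity across the two modulation regimes: in each regime the $2^{j/2}$ weight precisely matches the $L^2$ gain, so neither sum decays geometrically on its own, and one must verify that the transition at $2^j\sim a$ together with the smoothing by $\widehat\psi$ produces no logarithmic loss in $a$. Controlling the convolution tails carefully --- splitting $\widehat\psi*(\cdot)$ according to whether the convolution variable is comparable to, much smaller than, or much larger than $\mu$, and exploiting that $\psi\in\Sch$ furnishes arbitrary polynomial decay --- is where the genuine care is required. The limit $\epsilon=0$ then appears as the degenerate case $a=0$, where $K_0=\widehat\psi$ is Schwartz and the bound is immediate, so the estimate is seen to hold uniformly down to $\epsilon=0$.
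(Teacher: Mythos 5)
You should first know that the paper itself contains no proof of Proposition \ref{p43}: it is quoted verbatim from \cite{Guo} (``We state some results of Guo, Proposition \ref{p41}--\ref{p44} can be found in \cite{Guo}''), so the benchmark is the Guo--Wang free-evolution estimate, which in turn follows Molinet--Ribaud. Your reduction is exactly that standard route: factor $\ft(\psi W_\epsilon^\alpha\phi)(\xi,\tau)=\widehat{\phi}(\xi)K_\epsilon(\xi,\tau-\xi^3)$ with $K_\epsilon(\xi,\mu)=\ft_t\big(\psi(t)e^{-a|t|}\big)(\mu)$, $a=\epsilon|\xi|^{2\alpha}$, pull $\widehat\phi$ out block by block, and reduce to the scalar claim $\sum_{j\geq 0}2^{j/2}\sup_\xi\norm{\eta_j K_\epsilon(\xi,\cdot)}_{L^2_\mu}\lesssim 1$ uniformly in $a\geq 0$. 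All of that is correct, and your treatment of the regime $a\gtrsim 1$ (the bounds $|K_a(\mu)|\lesssim \min(a^{-1},a\mu^{-2})$ and the geometric sums on either side of $2^j\sim a$) is sound.

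The genuine gap is in the regime $0<a\lesssim 1$, and you have misdiagnosed where the danger lies. For $j\geq 1$ the shell bound $P_a(\mu)=\tfrac{2a}{a^2+\mu^2}\sim a2^{-2j}$ on $|\mu|\sim 2^j$ is fine, but $\eta_0$ is supported on the full neighborhood $|\mu|\leq 2$ of the origin, not on an annulus; there $P_a$ has a spike of height $a^{-1}$ and width $a$, so $\norm{\eta_0 P_a}_{L^2}\sim a^{-1/2}\to\infty$ as $a\to 0^+$ (consistent with $P_a\to 2\pi\delta_0$). Hence your claim that the bare model-kernel sum is $O(1)$ uniformly in $a$ is false, and the assertion that convolution with $\widehat\psi$ ``only redistributes mass across nearby dyadic shells and is absorbed by its rapid decay'' cannot carry the proof: if convolution merely redistributed the $X_k$-type mass, the $a^{-1/2}$ divergence would survive. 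What actually saves the estimate is not the decay of $\widehat\psi$ but the uniform mass bound $\norm{P_a}_{L^1}=2\pi$ (approximate-identity structure), equivalently the cancellation $e^{-a|t|}-1=O(a|t|)$ on the physical side. The standard repair --- and in substance what \cite{Guo} does --- is the case split: for $a\leq 1$ write $\psi(t)e^{-a|t|}=\psi(t)+\psi(t)\big(e^{-a|t|}-1\big)$; the first term gives $\widehat\psi$, Schwartz and $\epsilon$-independent, while for the second two integrations by parts (the kink at $t=0$ contributes a Dirac mass of size $2a$ in the second derivative, capping the decay at $\mu^{-2}$, which still beats the $2^{j/2}$ weight) yield $\big|\ft_t\big(\psi(e^{-a|t|}-1)\big)(\mu)\big|\lesssim a\jb{\mu}^{-2}$, so its contribution to the $j$-sum is $O(a)\leq O(1)$; for $a\geq 1$ your two-regime argument applies as written, since the low-modulation value $\norm{K_a}_{L^\infty}\lesssim a^{-1}\leq 1$ is then harmless. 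With this correction inserted, your proof closes and coincides with the cited one.
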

Similarly for the inhomogeneous linear operator we have
\begin{proposition}\label{p44}
Let $s\in \R$. There exists $C>0$ such that for all $v \in
\Sch(\R^2)$ and  $0\leq \epsilon\leq 1$,
\begin{equation}
\norm{\psi(t)L(v)}_{F^s}\leq C \norm{v}_{N^s}.
\end{equation}
\end{proposition}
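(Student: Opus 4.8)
The plan is to adapt the Molinet--Ribaud treatment of the dissipative Duhamel operator to Guo's dyadic $X_k$ calculus. Since both $L$ and multiplication by $\psi$ act diagonally in the spatial frequency $\xi$, while $\norm{\cdot}_{F^s}$ and $\norm{\cdot}_{N^s}$ are assembled from the pieces $\eta_k(\xi)\ft(\cdot)$ with the \emph{same} weight $2^{sk}$, it suffices to prove the single-block bound
\[
\norm{\eta_k(\xi)\ft(\psi L v)}_{X_k}\les \norm{(i+\tau-\omega(\xi))^{-1}\eta_k(\xi)\ft(v)}_{X_k},\qquad k\in\Z_+,
\]
with a constant independent of $k$, of $0\le\epsilon\le1$ and of $\alpha$, and then to square it, multiply by $2^{2sk}$ and sum over $k$. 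Writing $f_k=\eta_k(\xi)\ft(v)$, $\sigma=\tau-\omega(\xi)$ and $\beta=\beta(\xi):=\epsilon|\xi|^{2\alpha}$, and using $|i+\sigma|\sim\jb{2^j}$ on $\eta_j(\sigma)$, the right-hand side is comparable to $\sum_{j\ge0}2^{-j/2}\norm{\eta_j(\sigma)f_k}_{L^2}$, the quantity I must reproduce.

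The crux, and what I expect to be the main obstacle, is a kernel bound uniform as $\epsilon\to0$. By \eqref{defL} we have $L(v)=2W_0(t)\int e^{ix\xi}K(t,\xi,\tau')\,\ft(W_0(-t)v)(\xi,\tau')\,d\xi\,d\tau'$ with $K=(e^{it\tau'}-e^{-\beta|t|})/(i\tau'+\beta)$. The denominator vanishes as $\tau',\beta\to0$, but there the numerator vanishes too, so the singularity is removable; quantitatively,
\[
|K(t,\xi,\tau')|\le\frac{|e^{it\tau'}-1|+|1-e^{-\beta|t|}|}{\sqrt{\tau'^2+\beta^2}}\le\frac{|t|\,(|\tau'|+\beta)}{2^{-1/2}(|\tau'|+\beta)}\les|t|,
\]
together with the trivial bound $|K|\le2/|\tau'|$. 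Hence on $\supp\psi$ (where $|t|\les1$) one gets $|\psi(t)K|\les\min(1,|\tau'|^{-1})\sim\jb{\tau'}^{-1}$, uniformly in $\xi$ and in $0\le\epsilon\le1$. This is exactly the estimate that forbids any loss in the inviscid limit.

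Granting this, I split the $X_k$ norm by the output modulation $2^j$. For the low range ($j=0$, i.e.\ $|\sigma|\les1$) I argue on the physical side: $W_0(\pm t)$ is unitary on $H^s$, so the factor $\jb{\tau'}^{-1}$ yields, for $t\in\supp\psi$,
\[
\norm{\eta_k(\xi)L(v)(t)}_{H^s}\les 2^{sk}\int_\R\jb{\tau'}^{-1}\norm{f_k(\xi,\tau'+\omega(\xi))}_{L^2_\xi}\,d\tau'\les 2^{sk}\sum_{j\ge0}2^{-j/2}\norm{\eta_j(\sigma)f_k}_{L^2},
\]
after a dyadic-in-$\tau'$ sum; since $\psi$ has compact support this energy bound dominates the $j=0$ contribution and is consistent with Propositions~\ref{p41} and \ref{p43}. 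For the high range ($j\ge1$) I pass to the space--time Fourier side: as $\psi$ is Schwartz its convolution only mildly smears modulation, so an output at $|\sigma|\sim2^j$ arises (up to rapidly decaying tails) from input at $|\tau'|\sim2^j$, where $|\psi K|\les\jb{\tau'}^{-1}\sim2^{-j}$; combined with the weight $2^{j/2}$ this again reproduces $\sum_{j\ge1}2^{-j/2}\norm{\eta_j(\sigma)f_k}_{L^2}$. The dissipative Lorentzian $\ft_t(e^{-\beta|t|})=2\beta/(\beta^2+\sigma^2)$ contributes $\sum_{j\ge1}2^{j/2}\norm{\eta_j(\sigma)\,\beta(\beta^2+\sigma^2)^{-1}}_{L^2_\sigma}\les1$, bounded uniformly in $\beta$, and the cutoff is absorbed by the boundedness of multiplication by $\psi$ on $X_k$ noted after the definition of that space.

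Adding the two modulation ranges gives the single-block estimate; multiplying its square by $2^{2sk}$ and summing over $k\in\Z_+$ yields $\norm{\psi L v}_{F^s}\les\norm{v}_{N^s}$, with all constants inherited from the $\beta$-uniform kernel bound, hence independent of $\epsilon\in[0,1]$ and of $\alpha$.
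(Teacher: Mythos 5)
First, a point of comparison: the paper does not prove Proposition \ref{p44} at all --- it is quoted from \cite{Guo}, where the proof adapts Molinet--Ribaud's treatment of the dissipative Duhamel kernel to the dyadic $X_k$ structure. Your blockwise reduction (valid, since $L$ and $\psi(t)$ are diagonal in $\xi$ and $F^s$, $N^s$ carry the same weights $2^{sk}$), your uniform kernel bound $|\psi(t)K|\les \jb{\tau'}^{-1}$, and your Besov computation for the Lorentzian $2\beta/(\beta^2+\sigma^2)$ are all genuine ingredients of that proof, so you are on the intended route.

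There is, however, a real gap at the high-modulation step. The $X_k$ norm weights the output block $|\tau-\xi^3|\sim 2^j$ by $2^{j/2}$, and a pointwise-in-$t$ kernel bound can only deliver the $L^\infty_t L^2_x$-type estimate you use for $j=0$; it says nothing about $j\geq 1$. Your claim that an output at $|\sigma|\sim 2^j$ arises, up to rapidly decaying tails, from input at $|\tau'|\sim 2^j$ is false: the $t$-dependence of $K$ is not purely oscillatory, because $e^{-\beta|t|}$ (with $\beta=\epsilon|\xi|^{2\alpha}$, which ranges over $[0,2^{2k}]$, not just $[0,1]$) has a kink at $t=0$, so its time Fourier transform has only quadratically decaying Lorentzian tails spread over \emph{all} output modulations. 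Your separate Lorentzian estimate does not repair this, because in the region $|\tau'|\le 1$ you may not divide the two exponentials by $i\tau'+\beta$ separately: $|i\tau'+\beta|$ can be as small as $\beta\to 0$, each piece individually blows up, and only the difference is controlled. What is needed there is precisely the Molinet--Ribaud cancellation $e^{it\tau'}-e^{-\beta|t|}=(e^{it\tau'}-1)-(e^{-\beta|t|}-1)$, expanding $(e^{it\tau'}-1)/(i\tau'+\beta)$ as a power series in $it\tau'$ (using $|\tau'|^n/|i\tau'+\beta|\le |\tau'|^{n-1}$) and writing $(e^{-\beta|t|}-1)/(i\tau'+\beta)=\frac{\beta}{i\tau'+\beta}\cdot\frac{e^{-\beta|t|}-1}{\beta}$ with $|\beta/(i\tau'+\beta)|\le 1$, after which the kinked factor is estimated in the $2^{j/2}$-weighted Besov norm uniformly in $\beta$ --- this is where your Lorentzian computation actually belongs. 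Note also that the convolution-stability property recorded after the definition of $X_k$ requires kernels with $(1+2^{-l}|\tau-\tau'|)^{-4}$ decay, which the Lorentzian does not satisfy, so it cannot be absorbed that way either. With that region repaired as in \cite{Guo}, the rest of your outline (the $j=0$ energy piece, the region $|\tau'|\ge 1$, and the final $\ell^2_k$ summation) closes, uniformly in $\epsilon\in[0,1]$ and $\alpha$.
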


We next show \eqref{eq:MKdV-B} is uniformly (on $0<\epsilon \leq 1$)
locally well-posed in $H^s$, $s\geq1/4$. The procedure is quite
standard. See \cite{KPV}, for instance. By the scaling
\eqref{eq:scaling}, we see that $u$ solves \eqref{eq:MKdV-B} if and
only if $u_\lambda (x,t)= \lambda u(\lambda x, \lambda^3 t)$ solves
\begin{eqnarray}\label{eq:MKdV-B-m}
\partial_t u_\lambda + \partial^3_x u_{\lambda}+\epsilon \lambda^{4-2\alpha} |\partial_x|^{2\alpha}u_\lambda+6u^2_\lambda\partial_x (u_\lambda)=0, \ \
u_\lambda (0)= \lambda^2\phi (\lambda \, \cdot).
\end{eqnarray}
Since $s\geq1/4$,
\begin{equation}
\norm{\lambda^2\phi(\lambda
x)}_{H^s}=O(\lambda^{3/2+s}\norm{\phi}_{H^s}) \quad \mbox{as }
\lambda\rightarrow 0,
\end{equation}
thus we can first restrict ourselves to considering
\eqref{eq:MKdV-B} with data $\phi$ satisfying
\begin{equation}
\norm{\phi}_{H^s}=r\ll 1.
\end{equation}
We will mainly work on the integral equation
\begin{equation}
u(t)=W_\epsilon^\alpha(t)\phi_1-L \big(\partial_x (\psi^3
u^3)\big)(x,t) ,\label{eq:inteMKdV-buni}
\end{equation} and a truncated form
\begin{equation}
u(t)=\psi(t)\left[W_\epsilon^\alpha(t)\phi_1-L \big(\partial_x
(\psi^3 u^3)\big)(x,t) \right],\label{eq:truninteMKdV-buni}
\end{equation}
where $\psi$ is a smooth time cutoff function satisfying $\psi\in
C^{\infty}_{0}(\R)$, supp$\psi\subset[-2,2]$, $\psi\equiv1$ on
$[-1,1]$.

 We define the operator
\begin{equation}
\Phi_{\phi}(u)=\psi(t)W_\epsilon^\alpha(t)\phi- \psi(t)L
\big(\partial_x (\psi^3 u^3)\big),
\end{equation}
where $L$ is defined by \eqref{defL}. We will prove that $\Phi_\phi
(\cdot)$ is a contraction  mapping from
\begin{equation}
\mathcal{B}=\{w\in F^s:\ \norm{w}_{F^s}\leq 2cr\}
\end{equation}
into itself. From Propositions \ref{p42}, \ref{p43} and \ref{p44} we
get if $w\in \mathcal{B}$, then
\begin{eqnarray}
\norm{\Phi_\phi(w)}_{F^s}&\leq&
c\norm{\phi}_{H^s}+2\norm{\partial_x(\psi(t)^3w^3(\cdot, t))}_{N^s}\nonumber\\
&\leq& cr+2c\norm{w}_{F^s}^3\leq cr+2c(2cr)^3\leq 2cr,
\end{eqnarray}
provided $r$ satisfies $8c^3r^2\leq 1/2$. Similarly, for $w, h\in
\mathcal{B}$
\begin{eqnarray}
\norm{\Phi_\phi(w)-\Phi_\phi(h)}_{F^s}
&\leq& c\norm{ L \partial_x(\psi^3(\tau)(u^3(\tau)-h^3(\tau)))}_{F^s}\nonumber\\
&\leq&c(\norm{h^{2}(w-h)}_{F^s}+\norm{w(w^{2}-h^{2})}_{F^s})\nonumber\\
&\leq&c(\norm{u}_{F^s}\norm{w+h}_{F^s}\norm{w-h}_{F^s}+\norm{h}_{F^s}\norm{h}_{F^s}\norm{w-h}_{F^s})\nonumber\\
&\leq&8c^3r^2\norm{w-h}_{F^s}\leq \frac{1}{2}\norm{w-h}_{F^s}.
\end{eqnarray}
Thus $\Phi_\phi(\cdot)$ is a contraction. There exists a unique
$u\in \mathcal{B}$ such that
\begin{equation}
u=\psi(t)W_\epsilon^\alpha(t)\phi- 2\psi(t)L \big(\partial_x
(\psi^3 u^3)\big).
\end{equation}
Hence $u$ solves the integral equation \eqref{eq:inteMKdV-buni} in
the time interval $[0,1]$. Similar to Guo \cite{Guo}, we can show
that $u\in X^{1/2,s,\alpha}$. For general $\phi \in H^s$, by using
the scaling \eqref{eq:scaling} and the uniqueness result in
\cite{WL2}, we immediately obtain that Theorem \ref{t12} holds for
a small $T=T(\norm{\phi}_{H^s})>0$.

\section{Uniform global well-posedness for KdV-B equation}
In this section we will extend the uniform local solution obtained
in the last section to a uniform global solution. The standard way
is to use conservation law. We can verify that  if $v$ was a smooth
solution of \eqref{eq:MKdV}, then
\begin{equation}\label{eq:L2law}
H_{1}[v]=\int_\R (v_{x})^{2}-v^{4}+v^{2}dx
\end{equation}
is a conservation quantity for  \eqref{eq:MKdV}. However, there are
less symmetries for \eqref{eq:MKdV-B}. Let $u$ be a smooth solution
of \eqref{eq:MKdV-B}, we have
\begin{eqnarray}
\frac{d}{dt}H_{1}[u]&=&\int_\R
2u_{x}\partial_{x}u_{t}-4u^{3}u_{t}+2uu_{t}dx\nonumber\\
&=&\int_\R
2u_{x}\partial_{x}(-u_{xxx}-\varepsilon\abs{\partial_x}^{2\alpha}u+2(u^3)_x)-4u^{3}(-u_{xxx}-\varepsilon\abs{\partial_x}^{2\alpha}u+2(u^3)_x)\nonumber\\
&&+2u(-u_{xxx}-\varepsilon\abs{\partial_x}^{2\alpha}u+2(u^3)_x)dx\nonumber\\
&=&\int_\R
2u_{x}\partial_{x}(-\varepsilon\abs{\partial_x}^{2\alpha}u)+4u^{3}(\varepsilon\abs{\partial_x}^{2\alpha}u)-2u(\varepsilon\abs{\partial_x}^{2\alpha}u)dx\nonumber\\
&= &-2\varepsilon\int_\R
(\Lambda^{1+\alpha}u)^{2}dx+4\varepsilon\int_\R
u^{3}\Lambda^{2\alpha}udx-2\varepsilon\int_\R
\abs{\Lambda^{\alpha}u}^{2}dx\nonumber\\
&\leq&-\varepsilon\int_\R
(\Lambda^{2\alpha}u)^{2}dx+4\varepsilon\int_\R
u^{3}\Lambda^{2\alpha}udx
\end{eqnarray}
where we use the notation $\Lambda=|\partial_x|$. Using
Cauchy-Schwartz inequality, we have
\begin{eqnarray*}
4\varepsilon\int_\R
u^{3}\Lambda^{2\alpha}udx&\leq&4\varepsilon\norm{u^3}_2\norm{\Lambda^{2\alpha}u}_2\\
&\leq&4\varepsilon\norm{u}^3_6\norm{\Lambda^{2\alpha}u}_2\\
&\leq&4\varepsilon(4\norm{u}^6_6+\frac{1}{8}\norm{\Lambda^{2\alpha}u}^2_2)\\
&\leq&8\varepsilon\norm{u}^6_6+\frac{\varepsilon}{2}\norm{\Lambda^{2\alpha}u}^2_2
\end{eqnarray*}
Therefor, we have
\begin{equation*}
\frac{d}{dt}H_{1}[u]+\frac{\varepsilon}{2}\norm{\Lambda^{2\alpha}u}^2_2\leq\norm{u}^6_6
\end{equation*}
Using Galiardo-Nirenberg inequality
\[
\norm{u}_6^6 \les \norm{u}_2^{4}\norm{u_x}_{2}^{2}, \quad
\norm{u}_4^4 \les \norm{u}_2^{3}\norm{u_x}_2
\]
Hence, we get
\begin{equation}\label{here}
\sup_{[0,T]}\norm{u(t)}_{H^1}+\varepsilon^{\frac{1}{2}}(\int_0^T\norm{\Lambda^{2\alpha}u}_2^2d\tau)^{\frac{1}{2}}\leq
C(T,\norm{u_0}_{H^1})
\end{equation}
 By a standard limit
argument, \eqref{here} holds for $H^1$-strong solution. Thus if
$\phi\in H^1$, then we get that \eqref{eq:MKdV-B} is uniformly
globally well-posed.
\section{Limit Behavior}
In this section we prove Theorem \ref{t13}. From the remark
\ref{hr}, if we consider the limit behavior in $H^1$ we need a
$H^{2}$-conservation quantity. We first give a
$H^{2}$-conservation quantity for \eqref{eq:MKdV}. It is
well-known that the following KdV equation
\begin{equation}\label{eq:KdV}
u_t+u_{xxx}=3(u^2)_x, \ \ u(0)=\phi.
\end{equation}
 is completely integrable and has infinite
conservation laws. As a corollary one obtains that if $v$ was a
smooth solution to \eqref{eq:KdV}, then for any $k\in \Z_{+}$,
\begin{equation}
\sup_{t\in\R}\norm{v(t)}_{H^k}\les \norm{v_0}_{H^k}.
\end{equation}
Now we use Miura transform $\mathbf{M}$ to establish the relation of
\eqref{eq:KdV} and  \eqref{eq:MKdV}. If $v$ was a solution of
\eqref{eq:MKdV}, then $\mathbf{M}u=\partial_{x}v+v^2$ is a solution
of \eqref{eq:KdV}, see \cite{Tao3}. Using this fact, we can find a
$H^{2}$-conservation quantity of \eqref{eq:MKdV}. We can easily
verify that
\begin{equation}
H_{1}[u]=\int_{\R}(\partial_{x}u)^{2}+2u^{3}dx\label{hc}
 \end{equation} is a
$H^{1}$-conservation quantity of \eqref{eq:KdV}. Let
$u=\partial_{x}v+v^2$ in \eqref{hc},  we get
\begin{equation}
H_{2}[u]=\int_{\R}(u_{xx})^2+10u^2u_{x}^2+2u^6dx\label{hc2}
\end{equation}
is a $H^2$-conservation quantity of \eqref{eq:MKdV}. Obviously,
\eqref{eq:MKdV} has $L^{2}$-conservation law, so
\begin{equation}
H'_{2}[u]=\int_{\R}(u_{xx})^2+10u^2u_{x}^2+2u^6+u^2dx\label{hc3}
\end{equation}
is also a  $H^2$-conservation quantity of \eqref{eq:MKdV}.

 However, there are less
symmetries for \eqref{eq:MKdV-B}. We can still expect that the $H^k$
norm of the solution remains dominated for a finite time $T>0$,
since the dissipative term behaves well for $t>0$. We already see
that for $k=1$ from \eqref{eq:L2law}. Now we prove for $k=2$ which
will suffice for our purpose. We do not pursue for $k\geq 3$. Assume
$u$ is a smooth solution to \eqref{eq:MKdV-B}. By the equation
\eqref{eq:MKdV-B} and partial integration in \eqref{hc3}, we have
\begin{eqnarray*}
\frac{d}{dt}H'_{2}[u]&=&\int_\R
2u_{xx}\partial_{xx}(u_t)+20uu^{2}_{x}u_{t}+20u^{2}u_{x}\partial_{x}(u_{t})
+12u^{5}u_{t}+2uu_{t} dx\\
&=&\int_\R
2u_{xx}\partial_{xx}(-u_{xxx}-\varepsilon\abs{\partial_x}^{2\alpha}u+2(u^3)_x))dx\\
&&+\int_\R
20u^{2}u_{x}\partial_{x}(-u_{xxx}-\varepsilon\abs{\partial_x}^{2\alpha}u+2(u^3)_x)\\
&&+\int_\R(20uu^{2}_{x}+12u^{5}+2u)(-u_{xxx}-\varepsilon\abs{\partial_x}^{2\alpha}u+2(u^3)_x)dx\\
&=&-2\varepsilon\int_\R
(\abs{\partial_{x}}^{\alpha+2}u)^{2}dx-20\varepsilon\int_\R
uu^{2}_{x}(\abs{\partial_{x}}^{2\alpha}u)dx\\
&&-20\varepsilon\int_\R
u^{2}u_{x}\abs{\partial_{x}}^{2\alpha}u_{x}dx-12\varepsilon\int_\R
u^{5}(\abs{\partial_{x}}^{2\alpha}u)dx-2\varepsilon\norm{\abs{\partial_{x}}^{\alpha}u}^{2}_{2}\\
 &\leq&
 -2\varepsilon\norm{\Lambda^{\alpha+2}u}^{2}_{2}-2\varepsilon\norm{\Lambda^{\alpha}u}^{2}_{2}-
 20\varepsilon\int_\R
 uu^{2}_{x}\Lambda^{2\alpha}udx\\
 &&-20\varepsilon\int_\R
 u^{2}u_{x}\partial_{x}(\Lambda^{2\alpha}u)dx
-12\varepsilon\int_\R
 u^{5}\Lambda^{2\alpha}udx\\
&\lesssim&
 -\varepsilon\norm{\Lambda^{\alpha+2}u}^{2}_{2}-\varepsilon\norm{\Lambda^{\alpha}u}^{2}_{2}+
 \norm{uu^{2}_{x}}^{2}_{2}+\norm{u^{5}}^{2}_{2}+\norm{u^{2}u_{x}}^{2}_{2}+\norm{uu_{x}}^{2}_{2}+\norm{u}^{6}_{6}+\norm{u}^{2}_{2}
\end{eqnarray*}
where we use the notation $\Lambda=|\partial_x|$ and Cauchy-Schzrtz
inequality. Thus we have
\begin{eqnarray}
&&\frac{d}{dt}H'_{2}[u]+\frac{\epsilon}{2}
\norm{\Lambda^{2\alpha+1}u}_2^2 \nonumber\\
&\les&
\norm{uu^{2}_{x}}^{2}_{2}+\norm{u^{5}}^{2}_{2}+\norm{u^{2}u_{x}}^{2}_{2}+\norm{uu_{x}}^{2}_{2}+\norm{u}^{6}_{6}+\norm{u}^{2}_{2}\nonumber\\
&\lesssim&\norm{u}^{2}_{L^{\infty}}\norm{u_{x}}^{4}_{L^{4}}+\norm{u}^{4}_{L^{\infty}}\norm{u_x}^{2}_{2}+\norm{u}^{10}_{10}
+\norm{u}^{2}_{4}\norm{u_x}^{2}_{4}+\norm{u}^{2}_{4}\norm{u_{x}}^{2}_{2}\label{GN}
\end{eqnarray}
Using Galiardo-Nirenberg inequality, the right of \eqref{GN} can be
dominated by $\norm{u}_{H^2}$.

 Using Cauchy-Schwartz inequality, we
get
\begin{equation}\label{eq:H1law}
\sup_{[0,T]}\norm{u(t)}_{H^2}+\epsilon^{1/2}\brk{ \int_{0}^T
\norm{\Lambda^{2\alpha+1}u(\tau)}_2^2d\tau}^{1/2}\leq
C(T,\norm{\phi}_{H^2}), \quad \forall\ T>0.
\end{equation}
Assume $u_\epsilon$ is a $H^1$-strong solution to \eqref{eq:MKdV-B}
obtained in the last section and v is a $H^1$-strong solution to
\eqref{eq:MKdV} in \cite{Tao2}, with initial data $\phi_1,\phi_2\in
H^1$ respectively. We still denote by $u_\epsilon, v$ the extension
of $u_\epsilon, v$. From the scaling \eqref{eq:scaling}, we may
assume first that $\norm{\phi_1}_{H^1},\norm{\phi_2}_{H^1}\ll 1$.
Let $w=u_\epsilon-v$, $\phi=\phi_1-\phi_2$, then $w$ solves
\begin{eqnarray}\label{eq:diff}
\left \{
\begin{array}{l}
w_t+w_{xxx}+\epsilon |\partial_x|^{2\alpha}u_\epsilon=2(w(v^{2}+u^{2}_{\epsilon}+vu_{\epsilon}))_x, t\in \R_{+}, x\in \R,\\
v(0)=\phi.
\end{array}
\right.
\end{eqnarray}
We first view $\epsilon |\partial_x|^{2\alpha}u_\epsilon$ as a
perturbation to the difference equation of the MKdV equation.
Considering the integral equation of \eqref{eq:diff}
\begin{equation}
w(x,t)=W_0(t)\phi-\int_0^tW_0(t-\tau)[\epsilon
|\partial_x|^{2\alpha}u_\epsilon+2(w(v^{2}+u^{2}_{\epsilon}+vu_{\epsilon}))_x]d\tau,
\ t\geq 0.
\end{equation}
Then $w$ solves the following integral equation on $t\in [0,1]$,
\begin{eqnarray}
w(x,t)&=&\psi(t)[W_0(t)\phi-\chi_{\R_+}\int_0^tW_0(t-\tau)
(\tau)\psi(\tau)\epsilon
|\partial_x|^{2\alpha}u_\epsilon (\tau)d\tau\nonumber\\
&&\quad
-2\chi_{\R_+}\int_0^tW_0(t-\tau)(w(v^{2}+u^{2}_{\epsilon}+vu_{\epsilon}))_x(\tau)d\tau
].
\end{eqnarray}
By Proposition \ref{p42}, \ref{p43}, \ref{p44} and Lemma
 \ref{l41}, for $1/4\leq s\leq1$, we get
\begin{eqnarray}
\norm{w}_{F^s}&\les&
\norm{\phi}_{H^1}+\epsilon\norm{u_\epsilon}_{L^2_{[0,2]}\dot{H}_x^{2\alpha+s}}\nonumber\\
&&+\norm{w}_{F^s}\norm{u_{\varepsilon}}_{F^s}(\norm{v}_{F^s}+\norm{u_{\varepsilon}}_{F^s})+\norm{w}_{F^s}\norm{u_{\varepsilon}}_{F^s}\norm{v}_{F^s}.
\end{eqnarray}
Since from Theorem \ref{t12} we have
\[\norm{v}_{F^s}\les \norm{\phi_2}_{H^s}\ll 1,\quad \norm{u_\epsilon}_{F^s}\les \norm{\phi_1}_{H^s}\ll 1,\]
then we get that
\begin{equation}
\norm{w}_{F^s}\les
\norm{\phi}_{H^s}+\epsilon\norm{u_\epsilon}_{L^2_{[0,2]}\dot{H}_x^{2\alpha+s}}.
\end{equation}
From Proposition \ref{p41} and \eqref{eq:H1law} we get
\begin{equation}
\norm{u_\epsilon-v}_{C([0,1], H^s)}\les
\norm{\phi_1-\phi_2}_{H^s}+\epsilon^{1/2}C(\norm{\phi_1}_{H^2},\norm{\phi_2}_{H^1}).
\end{equation}
For general $\phi_1,\phi_2 \in H^1$, using the scaling
\eqref{eq:scaling}, then we immediately get that there exists
$T=T(\norm{\phi_1}_{H^1},\norm{\phi_2}_{H^1})>0$ such that
\begin{equation}\label{eq:limitL2}
\norm{u_\epsilon-v}_{C([0,T], H^s)}\les
\norm{\phi_1-\phi_2}_{H^s}+\epsilon^{1/2}C(T,\norm{\phi_1}_{H^2},\norm{\phi_2}_{H^1}).
\end{equation}
Therefore, \eqref{eq:limitL2} automatically holds for any $T>0$, due
to \eqref{eq:L2law} and \eqref{eq:H1law}.
\begin{proof}[Proof of Theorem \ref{t13}]
For fixed $T>0$, we need to prove that $\forall\ \eta>0$, there
exists $\sigma>0$ such that if $0<\epsilon<\sigma$ then
\begin{equation}\label{eq:limitHs}
\norm{S_T^\epsilon(\varphi)-S_T(\varphi)}_{C([0,T];H^s)}<\eta.
\end{equation}
We denote $\varphi_K=P_{\leq K}\varphi$, then we get
\begin{eqnarray}
&&\norm{S_T^\epsilon(\varphi)-S_T(\varphi)}_{C([0,T];H^s)}\nonumber\\
&\leq&\norm{S_T^\epsilon(\varphi)-S_T^\epsilon(\varphi_K)}_{C([0,T];H^s)}\nonumber\\
&&+\norm{S_T^\epsilon(\varphi_K)-S_T(\varphi_K)}_{C([0,T];H^s)}+\norm{S_T(\varphi_K)-S_T(\varphi)}_{C([0,T];H^s)}.
\end{eqnarray}
From Theorem \ref{t12} and \eqref{eq:limitL2}, we get
\begin{eqnarray}
\norm{S_T^\epsilon(\varphi)-S_T(\varphi)}_{C([0,T];H^s)}\les
\norm{\varphi_K-\varphi}_{H^s}+\epsilon^{1/2}C(T,K,
\norm{\varphi}_{H^s}).
\end{eqnarray}
We first fix $K$ large enough, then let $\epsilon$ go to zero,
therefore \eqref{eq:limitHs} holds.
\end{proof}
\noindent{\bf Acknowledgment.}  The author would like to express
his great thanks to Professor Baoxiang Wang and Doctor Zihua Guo
for their valuable suggestions and frequent encouragement during
every stage of this work. This work is supported in part by the
National Science Foundation of China, grant 10571004; and the 973
Project Foundation of China, grant 2006CB805902, and the
Innovation Group Foundation of NSFC, grant 10621061. \footnotesize

\end{document}